\pdfminorversion=6
\pdfobjcompresslevel=3
\pdfcompresslevel=9

\pdfminorversion=6
\pdfobjcompresslevel=3
\pdfcompresslevel=9

\documentclass[12pt,oneside,openany,article]{memoir}

\nouppercaseheads

\usepackage[amsthm,thmmarks,hyperref]{ntheorem}

\usepackage[noTeX]{mmap}

\usepackage[english.us]{babel}
\usepackage[leqno]{mathtools}

\usepackage{caption}

\usepackage[scr]{rsfso}
\usepackage{upgreek}

\usepackage[compress]{cite}
\usepackage{hypernat}

\usepackage{fix-cm}
\usepackage[cm]{sfmath}
\usepackage{sansmathaccent}

\usepackage{microtype}

\usepackage{subfig}
\usepackage{wrapfig}
\usepackage{array}

\usepackage{graphicx,xcolor}
\usepackage{eso-pic}

\usepackage{pdfpages}


\usepackage{todonotes}

\usepackage{hyperxmp}
\usepackage{zref, nameref}

\usepackage{xr-hyper}
\usepackage{url}
\usepackage[pdftex,bookmarks,pdfnewwindow,plainpages=false,unicode,pdfencoding=auto, backref=page]{hyperref}
\usepackage{backref}

\usepackage{bookmark}

\usepackage{enumitem}

\usepackage{amssymb} 

\hypersetup{
colorlinks=true,
linkcolor=black,
citecolor=black,
urlcolor=blue,
pdfauthor={Victor Ivrii},
pdftitle={Asymptotics of the ground state energy in the relativistic settings and with self-generated magnetic field},
pdfsubject={Asymptotics of the ground state energy in the relativistic settings and with self-generated magnetic field},
pdfkeywords={Relativistic Schroedinger operator, Self-Generated Magnetic Field, Heavy atoms and Molecules, Thomas Fermi theory, Scott correction term, Microlocal Analysis,  Sharp Spectral Asymptotics},
bookmarksdepth={4}
}
\theoremstyle{plain}
\newtheorem{theorem}{Theorem}[chapter]

\newtheorem{proposition}[theorem]{Proposition}
\newtheorem{corollary}[theorem]{Corollary}


\theoremstyle{definition}

\theoremstyle{remark}
\newtheorem{remark}[theorem]{Remark}

\makeatletter
\newtheoremstyle{plainfoot}%
  {\item[\hskip\labelsep \theorem@headerfont ##1\ ##2\,\footnotemark\theorem@separator]}%
  {\item[\hskip\labelsep \theorem@headerfont ##1\ ##2\ (##3)\, \footnotemark\theorem@separator]}
\makeatother

\theoremstyle{plainfoot}
\theoremseparator{.}
\newtheorem{theorem-foot}[theorem]{Theorem}
\newtheorem{lemma-foot}[theorem]{Lemma}
\newtheorem{proposition-foot}[theorem]{Proposition}
\newtheorem{corollary-foot}[theorem]{Corollary}
\newtheorem{conjecture-foot}[theorem]{Conjecture}
\newtheorem{condition-foot}[theorem]{Condition}

\theoremstyle{plainfoot}
\theoremseparator{.}
\theorembodyfont{}
\newtheorem{definition-foot}[theorem]{Definition}
\newtheorem{Problem-foot}[theorem]{Problem}

\theoremstyle{plainfoot}
\theoremseparator{.}
\theorembodyfont{}
\theoremheaderfont{\slshape}
\newtheorem{remark-foot}[theorem]{Remark}         
\newtheorem{example-foot}[theorem]{Example}
\newtheorem{problem-foot}[theorem]{Problem}

%
%
%
\DeclareTextCommand{\textvartheta}{PU}{\83\321}

\hyphenation{%
mic-ro-hy-per-bo-li-ci-ty  non-de-ge-ne-ra-cy}

\captionsetup[subfigure]{subrefformat=simple,labelformat=simple,listofformat=subsimple}

\DeclareMathAlphabet{\mathpzc}{OT1}{pzc}{m}{it}

\newcommand{\cE}{\mathcal{E}}

\newcommand{\cZ}{\mathcal{Z}}

\newcommand{\sC}{\mathscr{C}}

\newcommand{\sH}{\mathscr{H}}

\newcommand{\sL}{\mathscr{L}}

\newcommand{\E}{{\mathsf{E}}}
\newcommand{\TF}{{\mathsf{TF}}}

\newcommand{\Weyl}{{\mathsf{Weyl}}}

\newcommand{\Dirac}{{\mathsf{Dirac}}}
\newcommand{\Schwinger}{{\mathsf{Schwinger}}}

\newcommand{\orig}{{\mathsf{orig}}}

\newcommand{\D}{{\mathsf{D}}}

\newcommand{\sfH}{{\mathsf{H}}}

\newcommand{\N}{{\mathsf{N}}}
\newcommand{\I}{{\mathsf{I}}}

\newcommand{\x}{{\mathsf{x}}}
\newcommand{\y}{{\mathsf{y}}}

\newcommand{\loc}{{{\mathsf{loc}}}}

\newcommand{\bC}{{\mathbb{C}}}

\newcommand{\bR}{{\mathbb{R}}}

\newcommand{\fH}{{\mathfrak{H}}}

%
%
%

\newcommand{\boldupsigma}{{\boldsymbol{\upsigma}}}

%

%
%
%
%
\newcommand{\3}{{|\!|\!|}}

\newcommand{\Fed}{=\mathrel{\mathop:}}


%


\renewcommand{\Re}{\operatorname{Re}}       

\newcommand{\Spec}{\operatorname{Spec}}

\newcommand{\supp}{\operatorname{supp}}

\newcommand{\tr}{\operatorname{tr}}
\newcommand{\Tr}{\operatorname{Tr}}

\newenvironment{claim*}[1][{}]{\vglue10pt
\begin{trivlist}
\item[]}{\vglue10pt\end{trivlist}}

\newenvironment{phantomequation}[1][]{\refstepcounter{equation}}{}
\newcounter{note}


%
%
%
%
%
%

\newcommand{\RTF}{{\mathsf{RTF}}}

\newcommand{\e}{{\mathsf{e}}}

\externaldocument[book_new-]{book_new}[http://www.math.toronto.edu/ivrii/monsterbook.pdf]
\externaldocument[rel-]{Relativistic.I}[https://arxiv.org/pdf/1707.07014.pdf]

\numberwithin{equation}{chapter}

\title{Asymptotics of the ground state energy in the relativistic settings and with self-generated magnetic field}

\author{Victor Ivrii}

\begin{document}

\maketitle

\begin{abstract}
The purpose of this paper is to derive sharp asymptotics of  the ground state energy for the heavy atoms and molecules in the relativistic settings, with the self-generated magnetic field, and, in particular, to derive relativistic Scott correction term and also Dirac, Schwinger and relativistic correction terms. Also we will prove that Thomas-Fermi density approximates the actual density of the ground state, which opens the way to estimate the excessive negative and positive charges and the ionization energy.
\end{abstract}

\chapter{Introduction}
\label{sect-1}

Multielectron Hamiltonian is defined by
\begin{gather}
\mathsf{H}=\mathsf{H}_N\coloneqq   \sum_{1\le j\le N} H _{V,x_j}+\sum_{1\le j<k\le N}\frac{\e^2}{|x_j-x_k|}
\label{1-1}\\
\shortintertext{on}
\fH= \bigwedge_{1\le n\le N} \sH, \qquad \sH=\sL^2 (\bR^3, \bC^q)\simeq \sL^2 (\bR^3\times \{1,\ldots,q\},\bC) \\
\shortintertext{with}
H_V =T - \e V(x),
\label{1-3}
\end{gather}
describing $N$ same type particles in the external field with the scalar potential $-V$  and repulsing one another according to the Coulomb law; $\mathsf{e}$ is a charge of the electron, $T$ is an \emph{operator of the kinetic energy}. Unless specifically mentioned, $q=2$.

In the non-relativistic framework this operator is defined as
\begin{phantomequation}\label{1-4}\end{phantomequation}
\begin{align}
&T= \frac{1}{2\mu} (-i\hbar \nabla-\e A)^2,
\tag*{$\textup{(\ref*{1-4})}_{1}$}\label{1-4-2}\\
&T= \frac{1}{2\mu} \bigl((i\nabla -\e A)\cdot \boldupsigma \bigr) ^2
\tag*{$\textup{(\ref*{1-4})}_{2}$}\label{1-4-3}
\end{align}
in the  magnetic (Schr\"odinger) and  (Schr\"odinger-Pauli) settings respectively.

In the relativistic framework this operator is defined as
\begin{phantomequation}\label{1-5}\end{phantomequation}
\begin{align}
&T= \Bigl(c^2 (-i \hbar \nabla-\e A)^2+\mu^2 c^4\Bigr)^{\frac{1}{2}}-\mu^2c^4
\tag*{$\textup{(\ref*{1-5})}_{1}$}\label{1-5-2}\\
&T= \Bigl(c^2  \bigl((-i\hbar \nabla -\e A)\cdot \boldupsigma \bigr) ^2+\mu ^2 c^4\Bigr)^{\frac{1}{2}}-\mu^2c^4
\tag*{$\textup{(\ref*{1-5})}_{2}$}\label{1-5-3}
\end{align}
in the  magnetic (Schr\"odinger) and  (Schr\"odinger-Pauli) settings respectively.

Recall that in non-magnetic settings we have (\ref{rel-1-4}) and (\ref{rel-1-5}) of \cite{ivrii:rela} in the non-relativistic and relativistic settings respectively. 
Here
\begin{gather}
V(x)=\sum _{1\le m\le M}  \frac{Z_m \e}{|x-\y_m|}
\label{1-6}\\
\shortintertext{and}
d=\min _{1\le m<m'\le M}|\y_m-\y_{m'}|>0.
\label{1-7}
\end{gather}
where $Z_m\mathsf{e}>0$ and $\y_m$ are charges and locations of nuclei.

It is well-known that the non-relativistic operator is always semibounded from below. On the other hand, it 
 is also well-known \cite{Herbst, Lieb-Yau} that one particle relativistic non-magnetic operator is semibounded from below if and only if $Z_m \beta \le \frac{2}{\pi}$ for $m=1,\ldots,M$. In this paper we assume a strict condition:
\begin{equation}
Z_m \beta \le \frac{2}{\pi}-\epsilon\qquad \forall m=1,\ldots, M;\qquad \beta\coloneqq \frac{\e^2}{\hbar c}.
\label{1-8}
\end{equation}
In the non-magnetic case  we were interested in $\E\coloneqq \inf\Spec(\mathsf{H})$. In the magnetic case we consider only a self-generated magnetic field, that is we consider
\begin{gather}
\E ^*=\inf_{A\in \sH^1_0} \E(A),
\label{1-9}\\\shortintertext{where}
\E(A)= \inf \Spec (\sfH_{A,V} ) + 
\underbracket{\frac{\e^2}{\alpha \hbar ^2} \int |\nabla \times A|^2\,dx}
\label{1-10},\\
\alpha Z_m \le \kappa^* (2\pi^{-1} -\beta Z_m)^{\frac{3}{2}} \qquad m=1,\ldots, M.
\label{1-11}
\end{gather}
with a unspecified constant $\kappa^*>0$. We also assume that $d\ge CZ^{-1}$.

\begin{remark}\label{rem-1-1}
\begin{enumerate}[label=(\roman*), wide, labelindent=0pt]
\item\label{rem-1-1-i}
In the non-relativistic theory by scaling with respect to the spatial and energy variables we can make $\hbar=\e=\mu=1$ while  $\alpha$ and $Z_m$  are preserved.

\item\label{rem-1-1-ii}
In the relativistic theory by scaling with respect to the spatial and energy variables we can make 
$\hbar=\e=\mu=1$ while $\beta$, $\alpha$ and $Z_m$ are preserved.

\item\label{rem-1-1-iii}
In the one particle case there are additional scalings with respect to the spatial and energy variables, preserving only 
$Z_m\alpha$ and $Z_m\beta$ (but not the $Z_m,\alpha,\beta$).
\end{enumerate}
From now on we assume that such rescaling was done and we are free to use letters $\hbar$, $\mu $ and $c$ for other notations.
\end{remark}

The sharp results in the non-relativistic frameworks, without magnetic field and with self-generated magnetic filed were obtained in Chapters~\ref{book_new-sect-25} and~\ref{book_new-sect-27} of \cite{monsterbook} respectively, and in the relativistic frameworks without magnetic field––in \cite{ivrii:rela}. The transition from the non-relativistic framework to the relativistic one required mainly modifications of the function-analytic arguments in the \emph{singular zone\/}  $\bigcup_m \{x\colon |x-\y_m|\le cZ_m^{-1}\}$, and it was done in many articles, listed in the references, which we heavily use. On the other hand, transition from the non-magnetic case to the case of the self-generated magnetic field requires microlocal semiclassical arguments of Chapter~\ref{book_new-sect-27} of \cite{monsterbook} in the \emph{semiclassical zone\/} $\bigcap_m \{x\colon |x-\y_m|\ge cZ_m^{-1}\}$, which we also heavily rely upon. However relativistic settings require modifications of these arguments, and we are providing most of details when such modifications are needed, and are rather sketchy when no modifications are required.

\pagebreak

\chapter{Local semiclassical trace asymptotics}
\label{sect-2}

\section{Set-up}
\label{sect-2-1}

This section matches to Section~\ref{book_new-sect-27-2} of \cite{monsterbook}. We consider potential $W$ supported in 
$B(\x, r)$ (with $r=\ell(\x)$ the half-distance to the nearest nucleus), and scale it to $B(0,1)$ with $W\asymp 1$.

Recall that the original non-relativistic operator is 
\begin{gather}
\frac{1}{2} ((D- A)\cdot\boldupsigma)^2 -W,
\label{2-1}\\
\intertext{which after rescaling $x\mapsto (x-\x)/r$, $\tau\mapsto \tau /(Zr^{-1})$ becomes}
\frac{1}{2} ((h D- A')\cdot\boldupsigma)^2 -W,\qquad h= Z^{-1/2}r^{-1/2},\qquad A'=Z^{-1/2}r^{1/2} A,
\label{2-2}\\
\shortintertext{while the ``penalty'' becomes}
\frac{r}{\alpha } \int |\nabla \times  A'|^2\,dx= \frac{1}{\kappa h^2 } \int |\nabla \times  A'|^2\,dx
\label{2-3}
\end{gather}
with $\kappa = Z\alpha $ and we assume that $\kappa \le \kappa^*$.

What happens with our relativistic operator? The same scaling transforms 
$\bigl(\beta^{-2} ((D-A)\cdot\boldupsigma)^2 +\beta^{-4}\bigr)^{1/2}-\beta^{-2}$ into
\begin{multline}
rZ^{-1} \bigl(\beta^{-2}( (r^{-1} D-A)\cdot\boldupsigma)^2 +\beta^{-4}\bigr)^{1/2}-rZ^{-1}\beta^{-2} =\\
\bigl(\gamma^{-2} ((hD -A')\cdot\boldupsigma)^2 +\gamma^{-4} \bigr)^{1/2} -\gamma^{-2}
\label{2-4}
\end{multline}
with $\gamma=\beta h^{-1}\le 1$.

Exactly like in Subsubsection \ref{book_new-sect-27-2-1-2} of \cite{monsterbook} we need to start with the functional-analytic arguments.

\section{Functional analytic arguments}
\label{sect-2-2}

\subsection{Estimates}
\label{sect-2-2-1}

\begin{proposition-foot}\label{prop-2-1}\footnotetext{\label{foot-1} Cf. Proposition~\ref{book_new-prop-27-2-1} of \cite{monsterbook}.}
Let  $V\in \sL^{\frac{5}{2}}\cap \sL^4$. Then
\begin{gather}
\E^*\ge -C h^{-3}
\label{2-5}\\
\shortintertext{and either}
\frac {1}{\kappa h^2} \int |\partial A|^2\,dx \le Ch^{-3}
\label{2-6}
\end{gather}
or $\E (A) \ge ch^{-3}$.
\end{proposition-foot}

\begin{proof}
Using  Theorem~\ref{thm-A-1} (magnetic Daubechies inequality rather than magnetic Lieb-Thirring inequality) with 
$\gamma \coloneqq \gamma h$, $V\coloneqq h^{-2}V$, $A\coloneqq h^{-1}A$  and with multiplication of the result by $h^2$, we have 
\begin{multline}
\Tr (H_{A,V}^-) \ge \\
-Ch^{-3}\int \Bigl(V_+^{5/2}+\gamma^3 V_+^4\Bigr)\,dx -Ch^{-2}\Bigl(|\partial A|^2\,dx\Bigr)^{\frac{1}{4}} 
\Bigl(V_+^4\,dx\Bigr)^{\frac{3}{4}} 
\label{2-7}
\end{multline}
(cf. (\ref{book_new-27-2-9}) of \cite{monsterbook}; only the term $\gamma^3 V_+^4$ adds up); then (\ref{book_new-27-2-10}) holds, which completes the proof.
\end{proof}

\begin{proposition-foot}\label{prop-2-2}\footnotetext{\label{foot-2} Cf. Proposition~\ref{book_new-prop-27-2-2} of \cite{monsterbook}.}
Let  $V_+\in  \sL^{\frac{5}{2}}\cap \sL^4$, $\kappa \le ch^{-1} $ and
\begin{equation}
V\le - C^{-1} (1+|x|)^\delta +C.
\label{2-8}
\end{equation}
Then there exists a minimizer $A$.
\end{proposition-foot}

\begin{proof}
Let us consider a minimizing sequence $A_j$. Without any loss of the generality one can assume that $A_j\to A_\infty$ weakly in $\sH^1$ and  in $\sL^6$ and then strongly in $\sL^p_\loc$ with any $p<6$\,\footnote{\label{foot-3} Otherwise we select a converging subsequence.}. Then $A_\infty$ is a minimizer.

Really, due to (\ref{2-6}) and (\ref{2-8}) negative spectra of $H_{A_j,V}$ are discrete and the number of  negative eigenvalues is bounded by $N=N(h)$. Consider   ordered eigenvalues $\lambda_{j,k}$ of $H_{A_j,V}$. Without any loss of the generality one can assume that $\lambda_{j,k}$ have limits $\lambda_{\infty,k}\le 0$ (we go to the subsequence if needed).

We claim that $\lambda_{\infty,k}$ are also eigenvalues and if $\lambda_{\infty,k}=\ldots=\lambda_{\infty, k+r-1}$ then it is eigenvalue of at least multiplicity $r$. 

Indeed, let $u_{j,k}$ be corresponding eigenfunctions, orthonormal in $\sL^2$. 
Then in virtue of $A_j$ being bounded in $\sL^6$ and $V\in \sL^4$ we can estimate
\begin{equation*}
\||D|^{1/2}u_{j,k}\|\le K \|u_{j,k}\|_{6}^{1-\delta}\cdot \|u_{j,k}\|^\sigma\le K\||D|^{1/2}u_{j,k}\|^{1-\delta}\cdot \|u_{j,k}\|^\delta
\end{equation*}
with $\delta>0$ which implies $\||D|^{1/2}u_{j,k}\|\le K$. Also assumption (\ref{2-8}) implies that 
$\| (1+|x|)^{\delta/2 }u_{j,k}\|$ are bounded and therefore without any loss of the generality one can assume that $u_{j,k}$ converge strongly.

Then
\begin{gather}
\lim_{j\to\infty} \Tr (H_{A_j,V}^-) \ge \Tr (H_{A_\infty,V}^-),
\label{2-9}\\\\
\liminf_{j\to\infty}\int |\partial A_j|^2\, dx \ge
\int |\partial A_\infty |^2\, dx
\label{2-10}
\end{gather}
and therefore $\E(A_\infty)\le \E^*$. Then $A_\infty$ is a minimizer and there are equalities in (\ref{2-9})--(\ref{2-10}) and, in particular, there no negative eigenvalues of $H_{A_\infty, V}$  other than $\lambda_{\infty,k}$. 
\end{proof}

\subsection{Properties of the minimizer}
\label{sect-2-2-2} 
Next, we need to study the minimizer\footnote{\label{foot-4} We do not know if it is unique, exactly like in the non-relativistic case; see Remark~\ref{book_new-rem-27-2-3} of \cite{monsterbook}.}.

\begin{proposition-foot}\label{prop-2-3}\footnotetext{\label{foot-5} Cf. Proposition~\ref{book_new-prop-27-2-4} of \cite{monsterbook}. Observe that (\ref{2-11}) is more complicated than (\ref{book_new-27-2-14}) of \cite{monsterbook}.}
Let $A$ be a minimizer. Then
\begin{multline}
\frac{2}{\kappa h^2} \Delta A_j (x)   = \Phi_j\coloneqq  \\
\begin{aligned}
&- \Re \tr \Bigl[\int_0^\infty\upsigma_j ( (hD -A)_x\cdot \boldupsigma) e^{-\lambda S}e(.,.,0)  e^{-\lambda S}\,d\lambda\Bigr]\Bigr|_{x=y}\\
&- \Re \tr \Bigl[\int_0^\infty\upsigma_j  e^{-\lambda S}e(.,.,0)  e^{-\lambda S}\,^t( (hD -A)_y\cdot \boldupsigma)\,d\lambda\Bigr]\Bigr|_{x=y}.
\end{aligned}
\label{2-11}
\end{multline}
where $A=(A_1,A_2,A_3)$, $\boldupsigma=(\upsigma_1, \upsigma_2, \upsigma_3)$ and $e(x,y,\tau)$ is the Schwartz kernel of the spectral projector $\uptheta (- H)$ of $H=H_{A,V}$ and $\tr$ is a matrix trace;
\begin{equation}
S= \gamma^2 (T+ \gamma^{-2}) =\bigl( (\gamma^2(hD-A)\cdot\boldupsigma )^2+1\bigr)^{\frac{1}{2}}
\label{2-12}
\end{equation}
\end{proposition-foot}

\begin{proof}
Consider variation $\updelta A$ of $A$ and variation of $\Tr (H^-)$ where $H^-=H \uptheta (- H)$ is a negative part of $H$. Then, like in  the proof of Proposition~\ref{book_new-prop-27-2-4},
\begin{equation}
\updelta \Tr(H^-)= \Tr \bigl((\updelta H) \uptheta (- H)\bigr) .
\label{2-13}
\end{equation}
But we need to find $\updelta H=\gamma^{-2} \updelta  S$, which is a bit more tricky than in the non-relativistic case. Observe that 
\begin{gather}
\updelta (S^2)=S(\updelta S) +(\updelta S) S;
\label{2-14}\\
\shortintertext{then}
\updelta S= \int_0^\infty e^{-\lambda S}\updelta (S^2)e^{-\lambda S}\,d\lambda
\label{2-15}\\
\shortintertext{while}
\gamma^{-2}S^2=((hD-A)\cdot\boldupsigma)^2+\gamma^{-2}\label{2-16}\\
\intertext{and therefore}
\updelta (\gamma^{-2}S^2)=  - \sum_j \Bigl(\updelta A_j \upsigma_j ( (hD -A)\cdot \boldupsigma) - 
 ( (hD -A)\cdot \boldupsigma) \updelta A_j \upsigma_j\Bigr),
 \label{2-17}
\end{gather}
exactly like in non-relativistic case. 

Therefore $\Tr (\updelta S \theta(-H))$
is equal to the sum of $\int_0^\infty\,d\lambda $ of
\begin{align*}
-&\Tr \Bigl( e^{-\lambda S}\updelta A_j \upsigma_j ( (hD -A)\cdot \boldupsigma) e^{-\lambda S}\uptheta(-H)\Bigr)\\
-&\Tr \Bigl(e^{-\lambda S}( (hD -A)\cdot \boldupsigma) \updelta A_j \upsigma_je^{-\lambda S}\uptheta(-H)\Bigr)=\\
-&\Tr \Bigl(\updelta A_j \upsigma_j ( (hD -A)\cdot \boldupsigma) e^{-tS}\uptheta(-H)  e^{-\lambda S}\Bigr)\\
-&\Tr \Bigl( \updelta A_j \upsigma_j e^{-\lambda S}\uptheta(-H)e^{-\lambda S}( (hD -A)\cdot \boldupsigma)\Bigr).
\end{align*}
Then
$\Tr (\updelta L \theta(-H))=\int \sum_j \Phi_j(x)\updelta A_j $, which implies equality (\ref{2-11}).
\end{proof}

\begin{proposition-foot}\label{prop-2-4}\footnotetext{\label{foot-6} Cf. Proposition~\ref{book_new-prop-27-2-5} of \cite{monsterbook}.}
If for  $\kappa=\kappa^*$
\begin{gather}
\E^* \ge \Weyl_1 - CM
\label{2-18}\\\\
\intertext{with $M\ge C h^{-1}$, then for $\kappa \le \kappa^*(1-\epsilon_0)$}
\frac{1}{\kappa h^2} \int |\partial A|^2\,dx \le C_1M.
\label{2-19}
\end{gather}
\end{proposition-foot}

\begin{proof}
Proof is obvious, also based on the upper estimate $\E^*\le \E(0)\le \Weyl_1+Ch^{-1}$, which is  due to \cite{ivrii:rela}.
\end{proof}

\begin{proposition-foot}\label{prop-2-5}\footnotetext{\label{foot-7} Cf. Proposition~\ref{book_new-prop-27-2-6} of \cite{monsterbook}.}
Let estimate \textup{(\ref{2-19})} be fulfilled and let
\begin{equation}
\varsigma = \kappa M h \le c.
\label{2-20}
\end{equation}
Then for $\tau\le c$

\begin{enumerate}[label=(\roman*), wide, labelindent=0pt]
\item\label{prop-2-5-i}
Operator norm in $\sL^2$ of $(hD)^k \uptheta(\tau -H)$ does not exceed $C$ for $k=0,1$.

\item\label{prop-2-5-ii}
Operator norm in $\sL^2$ of
$(hD)^k\bigl((hD-A)\cdot\boldupsigma\bigr) \uptheta(\tau -H)$ does not exceed $C$ for $k=0$.
\end{enumerate}
\end{proposition-foot}

\begin{proof}
First, let us  repeat of some arguments of the proof of Proposition~\ref{book_new-prop-27-2-6} of \cite{monsterbook}. 
Let $u =\uptheta(\tau-H) f$. Then $\|u\|\le \|f\|$ and since 
\begin{equation}
\|A\|_{\sL^6} \le C\| \partial A\| \le C(\kappa M)^{\frac{1}{2}}h,
\label{2-21}
\end{equation}
we conclude that
\begin{multline*}
\|hD u\| \le \|(hD-A)u\| +\|A u\| \le
\|(hD-A)u\| +C\|A \|_{\sL^6}\cdot \|u\|_{\sL^3}\le\\[2pt]
\|(hD-A)u\| +C(\kappa M)^{\frac{1}{2}}h \|u\|^{1/2}\cdot \|u\|_{\sL^6}^{1/2}\le \\[2pt]
\|(hD-A)u\| +C(\kappa M h)^{\frac{1}{2}} \|u\|^{1/2}\cdot \|h Du\|^{1/2}\le\quad \\[2pt]
\|(hD-A)u\| + \frac{1}{2} \|hDu\| + C\kappa M h  \|u\|;
\end{multline*}
therefore due to (\ref{2-20})
\begin{equation}
\|hD u\| \le 2\|(hD-A)u\| +C\kappa M h  \|u\|.
\label{2-22}
\end{equation}

Further,  $\|T u\|\le c_1\|u\|$ because $H\ge -c$, $|V|\le c$, $|\tau|\le c$; then 
$\|(T+\gamma^{-2})u\le (c_1+\gamma^{-2})\|u\|$ and therefore 
\begin{gather}
(((T+\gamma^{-2})^2-\gamma^{-4})u,u)\le ((c_1+\gamma^{-2})^2-\gamma^{-4})= (2c_1\gamma^{-2} +c_1^2)\|u\|^2,
\notag\\
\intertext{and finally}
(Lu,u)\le C\|u\|^2\label{2-23}\\
\shortintertext{with}
L\coloneqq ((hD-A)\cdot \boldupsigma)^2.
\label{2-24}
\end{gather}
Then, again following the same proof, we conclude that 
\begin{equation}
\|(h D-A)u\|  \le C\|u\|\quad \text{and}\quad \|hDu\|\le C(1+\kappa M h) \|u\|,
\label{2-25}
\end{equation}
provided $\kappa Mh^{1+\delta}\le c$ for sufficiently small $\delta>0$. Therefore under assumption (\ref{2-20})  for $j=0,1$  both Statements~\ref{prop-2-5-i} and \ref{prop-2-5-ii} are proven.
\end{proof}

Thus, in contrast to Proposition~\ref{book_new-prop-27-2-6} of \cite{monsterbook}, we do not have $k=2$ in Statement~\ref{prop-2-5-i}, and $k=1$ in Statement~\ref{prop-2-5-ii} so far and need some extra arguments.

\begin{proposition}\label{prop-2-6}
Assume that $\|V\|_{\sC^2}\le c$. Then 
\begin{equation}
\|[S,V]u\|\le Ch\gamma^2 (\|L^{\frac{1}{2}}u\|+ \|u\|).
\label{2-26}
\end{equation}
\end{proposition}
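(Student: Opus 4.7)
The plan is to use the Sylvester-type integral representation already exploited in the proof of Proposition~\ref{prop-2-3}: since $S\ge 1>0$, for any bounded $V$,
\begin{equation*}
 [S,V]=\int_0^\infty e^{-\lambda S}\,[S^2,V]\,e^{-\lambda S}\,d\lambda,
\end{equation*}
which follows from $[S^2,V]=S[S,V]+[S,V]S$ and the uniqueness of solutions of the Sylvester equation $SK+KS=B$ for positive $S$. So everything reduces to computing $[S^2,V]$ and estimating the integrand. Since $S^2=\gamma^2 L+1$ we have $[S^2,V]=\gamma^2[L,V]$, and with $P_\sigma\coloneqq (hD-A)\cdot\boldupsigma$ and $R\coloneqq (\nabla V)\cdot\boldupsigma$, the basic relation $[P_\sigma,V]=-ihR$ yields
\begin{equation*}
 [L,V]=[P_\sigma^{\,2},V]=P_\sigma[P_\sigma,V]+[P_\sigma,V]P_\sigma=-ih(P_\sigma R+RP_\sigma).
\end{equation*}

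The key algebraic input is the Pauli-matrix collapse
\begin{equation*}
 [P_\sigma,R]=-ih\sum_{j,k}\upsigma_j\upsigma_k\,\partial_j\partial_k V=-ih\,\Delta V,
\end{equation*}
because the antisymmetric part $i\epsilon_{jkl}\upsigma_l$ of $\upsigma_j\upsigma_k$ is killed by the symmetric $\partial_j\partial_k V$. This lets us pull $P_\sigma$ to the right of $R$:
\begin{equation*}
 P_\sigma R+RP_\sigma=2RP_\sigma-ih\,\Delta V,\qquad [S^2,V]=-2ih\gamma^2 RP_\sigma-h^2\gamma^2\,\Delta V.
\end{equation*}

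Substituting into the integral representation gives two contributions. For the first, use $\|e^{-\lambda S}\|\le e^{-\lambda}$ (as $S\ge 1$) and $\|R\|_{\mathrm{op}}\le C\|\nabla V\|_\infty\le C$; the crucial observation is that $\|P_\sigma v\|=\|L^{1/2}v\|$ because $P_\sigma^{\,2}=L$, and $L^{1/2}$ commutes with $e^{-\lambda S}$ via the functional calculus on $L$. Hence
\begin{equation*}
 \|e^{-\lambda S}RP_\sigma e^{-\lambda S}u\|\le Ce^{-\lambda}\|L^{1/2}e^{-\lambda S}u\|=Ce^{-\lambda}\|e^{-\lambda S}L^{1/2}u\|\le Ce^{-2\lambda}\|L^{1/2}u\|,
\end{equation*}
integrating to $Ch\gamma^2\|L^{1/2}u\|$ after the $2h\gamma^2$ prefactor. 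For the second, $\|\Delta V\|_\infty\le c$ gives $\|e^{-\lambda S}(\Delta V)e^{-\lambda S}u\|\le Ce^{-2\lambda}\|u\|$, producing $Ch^2\gamma^2\|u\|$. Adding and absorbing the factor $h\le 1$ delivers $\|[S,V]u\|\le Ch\gamma^2(\|L^{1/2}u\|+\|u\|)$.

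The only non-routine step is the Pauli identity $[P_\sigma,R]=-ih\Delta V$: without asymmetrising $P_\sigma R+RP_\sigma$ in this way one cannot keep $P_\sigma$ adjacent to $e^{-\lambda S}$, and a symmetric estimate would force $L^{1/2}$ through $e^{-\lambda S}$ with an $h$-order commutator involving $A$, which is only controlled in $\sL^6$ and would spoil the bound. Once that algebraic point is isolated, all remaining estimates are standard spectral-calculus bookkeeping.
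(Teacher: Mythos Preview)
Your proof is correct and follows the same route as the paper: the Sylvester representation $[S,V]=\gamma^2\int_0^\infty e^{-\lambda S}[L,V]e^{-\lambda S}\,d\lambda$ from $S^2=\gamma^2 L+1$, the contraction $\|e^{-\lambda S}\|\le e^{-\lambda}$, and the commutator bound $\|[L,V]w\|\le Ch(\|L^{1/2}w\|+\|w\|)$, combined via $[L^{1/2},e^{-\lambda S}]=0$. The paper simply records these three ingredients as (2-27)--(2-29) without justification; you have supplied the missing derivation of (2-28) by computing $[L,V]=-2ihRP_\sigma-h^2\Delta V$ via the Pauli identity $[P_\sigma,R]=-ih\Delta V$, which is exactly what is needed to land $P_\sigma$ on the right so that $\|P_\sigma e^{-\lambda S}u\|=\|e^{-\lambda S}L^{1/2}u\|$.

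One small remark on your closing paragraph: the commutation $[L^{1/2},e^{-\lambda S}]=0$ is automatic from the joint functional calculus in $L$ (since $S=\sqrt{\gamma^2L+1}$), so there is no ``$h$-order commutator involving $A$'' lurking there; the genuine obstruction to the symmetric splitting is rather that $\|P_\sigma R w\|$ cannot be bounded by $\|L^{1/2}w\|$ without first commuting $R$ past $P_\sigma$, which is precisely what your Pauli identity accomplishes.
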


\begin{proof}
Recall that that $S^2=\gamma^2 L+1$. Therefore $\gamma^2 [L,V]=[S,V]S+S[S,V]$ and then
\begin{gather}
[S,V]=\gamma^2 \int_0^\infty e^{-\lambda S}[L,V]e^{-\lambda S}\,d\lambda.
\label{2-27}\\
\shortintertext{Also}
\|[L,V] w \|\le Ch(\|L^{1/2} w\|+ \|w\|)
\label{2-28}\\
\shortintertext{and}
\|e^{-\lambda S}\|\le e^{-\lambda}.
\label{2-29}
\end{gather}
\end{proof}

\begin{proposition}\label{prop-2-7}
\begin{enumerate}[label=(\roman*), wide, labelindent=0pt]
\item\label{prop-2-7-i}
Assume that $\|V\|_{\sC^2}\le c$ and $|\tau|\le c$. Then the operator norm of  $(hD)^k\uptheta  (\tau-H)$ does not exceed $C$ for $k=0,1,2$.

\item\label{prop-2-7-ii}
Assume that $\|V\|_{\sC^3}\le c$ and $|\tau|\le c$. Then the operator norm of  operators 
$(hD)^k( (hD -A)_x\cdot \boldupsigma)\uptheta  (\tau-H)$ and $(hD)^k\hat{\Phi}_j\uptheta  (\tau-H)$ with
\begin{equation}
\hat{\Phi}_j= \int_0^\infty\upsigma_j ( (hD -A)_x\cdot \boldupsigma) e^{-\lambda S}e(.,.,0)  e^{-\lambda S}\,d\lambda
\label{2-30}
\end{equation}
do not exceed $C$ for $k=0,1,2$.
\end{enumerate}
\end{proposition}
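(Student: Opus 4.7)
The strategy is to bootstrap the $k=0,1$ bounds of Proposition~\ref{prop-2-5} and the commutator estimate of Proposition~\ref{prop-2-6}, combined with functional calculus applied to $\uptheta(\tau-H)$. Throughout, set $u=\uptheta(\tau-H)f$: since $H$ is semibounded below and $|\tau|\le c$, the spectral theorem yields $\|H^j u\|\le C\|u\|$ for every fixed $j\ge 0$.

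For Part~\ref{prop-2-7-i} with $k=2$, the key algebraic identity is $L=\gamma^2 T^2+2T$, obtained by squaring $S=\gamma^2 T+1$ and using $S^2=\gamma^2 L+1$. Writing $T=H+V$ and expanding $T^2=(H+V)^2$, every term is immediately $\le C\|u\|$ except $HVu$, which I rewrite as $HVu=VHu+[H,V]u$, where $[H,V]u=[T,V]u=\gamma^{-2}[S,V]u$. By Proposition~\ref{prop-2-6} and the bound $\|L^{1/2}u\|\le C\|u\|$ already known from Proposition~\ref{prop-2-5}\ref{prop-2-5-ii}, this commutator is $\le Ch\|u\|$, so $\|T^2u\|\le C\|u\|$ and hence $\|Lu\|\le C\|u\|$ (using $\gamma\le 1$). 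To convert to $\|(hD)^2 u\|$, I use the Pauli identity $L=(hD-A)^2\pm h\,\boldupsigma\cdot(\nabla\times A)$, controlling the field term by $\|\nabla\times A\|_{\sL^2}\|u\|_{\sL^6}\le C(\kappa M)^{1/2}h\cdot h^{-1}\|u\|=C\varsigma^{1/2}\|u\|$ via Sobolev, (\ref{2-19}), and the $k=1$ bound of Proposition~\ref{prop-2-5}; the cross terms in $(hD-A)^2=(hD)^2-h(A\cdot D+D\cdot A)+A^2$ are handled by the $\sL^6$ estimate (\ref{2-21}) on $A$ and the same $k=1$ control of $hDu$.

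For Part~\ref{prop-2-7-ii}, once $\|Lu\|\le C\|u\|$ is in hand, the $k=1$ bound on $((hD-A)\cdot\boldupsigma)u$ follows from one further commutation, and the strengthened hypothesis $\|V\|_{\sC^3}\le c$ permits one more iteration of the Proposition~\ref{prop-2-6} commutator argument to reach $k=2$. The operator $\hat\Phi_j$ is treated via its representation (\ref{2-30}): the bound $\|e^{-\lambda S}\|\le e^{-\lambda}$ from (\ref{2-29}) supplies exponential $\lambda$-decay, while $((hD-A)\cdot\boldupsigma)e^{-\lambda S}$ and the projector-like factor $e(\cdot,\cdot,0)$ are controlled uniformly by Part~\ref{prop-2-7-i}, making the $\lambda$-integral absolutely convergent in operator norm.

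The main obstacle is the clean conversion from the elliptic-type bound $\|Lu\|\le C\|u\|$ to the derivative-type bound $\|(hD)^2u\|\le C\|u\|$: since $\nabla\times A$ and $A$ are only known in $\sL^2$ and $\sL^6$ respectively from (\ref{2-19})--(\ref{2-21}), one must thread a Sobolev interpolation, using the $k=1$ bound of Proposition~\ref{prop-2-5} to place $u\in\sL^6$, and exploit $\varsigma\le c$ from (\ref{2-20}) to absorb the field-dependent cross terms. A secondary subtlety is the bookkeeping of $\gamma$-powers: the $\gamma^{-2}$ in $[T,V]=\gamma^{-2}[S,V]$ is designed precisely to cancel the $\gamma^2$ factor in Proposition~\ref{prop-2-6}, and this cancellation must be preserved at every step of the iteration to avoid a spurious blow-up as $\gamma\to 0$.
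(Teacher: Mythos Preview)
Your plan for Part~\ref{prop-2-7-i} matches the paper exactly: the identity $L=\gamma^2T^2+2T$ is the paper's $\gamma^{-2}L=(T+2\gamma^{-2})T$, and both reduce to bounding $T^2u=(H+V)^2u$ via Proposition~\ref{prop-2-6}, with the $\gamma$-cancellation you correctly highlight. For Part~\ref{prop-2-7-ii} the paper makes your ``one more iteration'' explicit: substituting $(H-\tau)u$ for $u$ gives first $\|TLu\|\le C\|u\|$ and then, after a second substitution, $\|L^2u\|\le C\|u\|$; the $\sC^3$ hypothesis enters because this second step produces $T[L,V]u$, and estimating $\|T V'L^{1/2}u\|$ (with $V'$ a first derivative of $V$) forces one more commutator through $V'$, costing a third derivative of $V$.

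There is one slip in your treatment of $\hat\Phi_j$. The operator $((hD-A)\cdot\boldupsigma)e^{-\lambda S}$ is \emph{not} uniformly bounded in $\lambda$: by spectral calculus its norm is $\sim(\lambda\gamma)^{-1}$ as $\lambda\to 0$, so neither Part~\ref{prop-2-7-i} nor anything else controls it directly. The correct mechanism is that $P\coloneqq(hD-A)\cdot\boldupsigma$ and $L=P^2$ both commute with $e^{-\lambda S}$ (all are functions of $P$), so one may slide $L$ through the integrand and write
\[
L\hat\Phi_j u=\int_0^\infty e^{-\lambda S}\bigl(\upsigma_j\, LP\,\uptheta(-H)\bigr)e^{-\lambda S}u\,d\lambda,
\]
then invoke the bound $\|LP\,\uptheta(-H)\|\le C$ just derived from $\|L^2u\|\le C\|u\|$, together with $\|e^{-\lambda S}\|\le e^{-\lambda}$. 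This yields $\|L\hat\Phi_j u\|\le C\|u\|$, after which the same $L\to(hD)^2$ conversion finishes. The paper compresses this into a single clause, but the commutativity of $L$ with $e^{-\lambda S}$---not a uniform bound on $Pe^{-\lambda S}$---is what makes it work.
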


\begin{proof}
\begin{enumerate}[label=(\roman*), wide, labelindent=0pt]
\item\label{pf-2-7-i}
Let $u=\uptheta (\tau-H)f$ with $f\in \sL^2$. Then $u$ satisfies (\ref{2-2}) and $\|Tu\|\le C\|u\|$. Also, which 
implies 
\begin{equation*}
\gamma^{-2}\|Lu\|=\|(T+2\gamma^{-2})Tu\|\le C\gamma^{-2}\|u\| + C\gamma^{-1}\|[T,V]u\|\le C_1\gamma^{-2}\|u\|
\end{equation*}
 due to (\ref{2-26}) .
Then, repeating arguments of the proof of Propositiob~\ref{book_new-27-2-6} of \cite{monsterbook}, we conclude that $\|(hD)^2 u\|\le C\|u\|$, i.e. Statement~\ref{prop-2-7-i}.

\item\label{pf-2-7-ii}
Plugging $(T-V-\tau)u$ instead of $u$ (with $\|(T-V-\tau)u\|\le C\|u\|$) we have
$\|L(T-\tau-V)u\|\le C\|u\|$. Then
\begin{equation*}
\|TLu\|\le C\|Lu\| + C\|[L,V]u\|\le C(\|Lu\|+\|u\|)\le C_1\|u\|.
\end{equation*}
Again plugging $(T-V-\tau)u$ instead of $u$ we have
\begin{multline*}
\gamma^{-2}\|L^2u\| =\| (T+2\gamma^{-2})TLu\| \le \\
\|T(T-V-\tau)Lu\|\le  C \gamma^{-2}\|Lu\| + C\|T[L,V]u\|.
\end{multline*}
Further, the last term does not exceed $Ch\|T V'L^{\frac{1}{2}}u\| + Ch^2\| TV''u\|$ where $V'$ are miscellaneous first derivatives of $V$ and $V''=\Delta V$. Then, the former does not exceed  $C\|Lu\|$, while the latter does not exceed $C\gamma^{-2}\|u\| + h\| \nabla (V''u)\|$, which does not exceed $C\gamma^{-2}\|u\|$.

Therefore $\|L^2 u\|\le C\|u\|$, which implies that $\|L ((hD-A)\cdot \boldupsigma)u\|\le C\|u\|$, which in turn implies that $\|(hD)^2 ((hD-A)\cdot \boldupsigma)u\|\le C\|u\|$ and, finally, $\|(hD)^2\hat{\Phi}_ju\|\le C\|u\|$.
\end{enumerate}
\end{proof}

\begin{corollary}\label{cor-2-8}
\begin{enumerate}[label=(\roman*), wide, labelindent=0pt]
\item\label{cor-2-8-i}
Assume that $\|V\|_{\sC^2}\le c$ and $|\tau|\le c$. Then the operator norm of  $\uptheta  (\tau-H)$ from $\sL^2$ to 
$\sC^{\delta}$ does not exceed $Ch^{-3/2-\delta}$ for $0\le \delta\le \frac{1}{2}$.

\item\label{cor-2-8-ii}
Assume that $\|V\|_{\sC^3}\le c$ and $|\tau|\le c$. Then the operator norm of $\hat{\Phi}_j$ from $\sL^2$ to $\sC^\delta$ do not exceed $Ch^{-3/2-\delta}$ for $0\le \delta\le \frac{1}{2}$. Then $\|\Phi_j\|_{\sC}\le Ch^{-3}$.
\end{enumerate}
\end{corollary}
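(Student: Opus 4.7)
The plan is to deduce both statements from Proposition~\ref{prop-2-7} by composing its $\sL^2$-bounds with the semiclassical Sobolev embedding
\[
\|u\|_{\sC^\delta(\bR^3)}\le Ch^{-3/2-\delta}\,\|u\|_{\sH^{3/2+\delta,h}(\bR^3)},\qquad 0\le\delta\le \tfrac{1}{2},
\]
where $\|u\|_{\sH^{s,h}}^2=\sum_{|\alpha|\le s}\|(hD)^\alpha u\|^2$, with the usual interpolation for non-integer $s$. This embedding follows from the classical $\sH^{3/2+\delta}\hookrightarrow\sC^\delta$ by the rescaling $x\mapsto hx$: the rescaling multiplies $\|u\|_{\sL^2}$ by $h^{3/2}$ and the $\sC^\delta$-seminorm by $h^{-\delta}$, together accounting for the factor $h^{-3/2-\delta}$.

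For Part~\ref{cor-2-8-i} I would interpolate the $\sL^2$-bounds $\|(hD)^k\uptheta(\tau-H)\|_{\sL^2\to\sL^2}\le C$ of Proposition~\ref{prop-2-7}\ref{prop-2-7-i} (valid for $k=0,1,2$) between $k=1$ and $k=2$, obtaining $\|\uptheta(\tau-H)f\|_{\sH^{3/2+\delta,h}}\le C\|f\|$, and compose with the embedding above. The first claim of Part~\ref{cor-2-8-ii} is the same argument with the bounds of Proposition~\ref{prop-2-7}\ref{prop-2-7-ii} in place of those of~\ref{prop-2-7-i}.

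For the pointwise estimate $\|\Phi_j\|_{\sC}\le Ch^{-3}$ I would exploit the idempotence $\uptheta(-H)=\uptheta(-H)^2$ to split the projector in~\eqref{2-30} and write the integrand as a product
\[
\upsigma_j((hD-A)\cdot\boldupsigma)\,e^{-\lambda S}\,\uptheta(-H)\;\cdot\;\uptheta(-H)\,e^{-\lambda S}\;=:\;T_{1,\lambda}\,T_{2,\lambda}^{*}.
\]
The commutation $[(hD-A)\cdot\boldupsigma,\,e^{-\lambda S}]=0$ (valid because $S$ is a function of $L=((hD-A)\cdot\boldupsigma)^2$), together with the spectral bound $\|e^{-\lambda S}\|\le e^{-\lambda}$ from~\eqref{2-29}, shows that both $T_{1,\lambda}$ and $T_{2,\lambda}$ are $\sL^2\to\sL^\infty$-bounded with norm $\le Ch^{-3/2}e^{-\lambda}$: $T_{2,\lambda}$ by Part~\ref{cor-2-8-i} at $\tau=0$, and $T_{1,\lambda}$ by the first claim of Part~\ref{cor-2-8-ii}, in each case after moving the $e^{-\lambda S}$ through. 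The elementary diagonal-kernel inequality
\[
|T_{1,\lambda}T_{2,\lambda}^{*}(x,x)|\le\|T_{1,\lambda}\|_{\sL^2\to\sL^\infty}\,\|T_{2,\lambda}\|_{\sL^2\to\sL^\infty}
\]
then yields a pointwise bound of order $h^{-3}e^{-2\lambda}$; integrating in $\lambda\in[0,\infty)$ and adding the symmetric adjoint contribution finishes the proof.

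The main obstacle will be the uniformity in $\lambda$ of the factorized $\sL^2\to\sL^\infty$ bounds, which forces careful handling of the $e^{-\lambda S}$ factors. The commutation with $((hD-A)\cdot\boldupsigma)$ reduces each $T_{i,\lambda}$ to the $\lambda$-independent operators already controlled in Parts~\ref{cor-2-8-i}--\ref{cor-2-8-ii}, while the exponential spectral decay $\|e^{-\lambda S}\|\le e^{-\lambda}$ supplies the integrable factor that keeps the $\lambda$-integral convergent and the final bound of order $h^{-3}$.
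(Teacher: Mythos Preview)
Your argument for Part~\ref{cor-2-8-i} and for the first claim of Part~\ref{cor-2-8-ii} is exactly the intended deduction: the $(hD)^k$--bounds of Proposition~\ref{prop-2-7} together with the semiclassical Sobolev embedding $\sH^{2,h}\hookrightarrow\sC^{1/2}$ (interpolated for intermediate $\delta$) give the result. The paper states the Corollary without proof, as a direct consequence of Proposition~\ref{prop-2-7}, so here you are simply supplying what was left implicit.

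For the final claim $\|\Phi_j\|_{\sC}\le Ch^{-3}$ your factorization via $\uptheta(-H)=\uptheta(-H)^2$ and the diagonal--kernel inequality are the right structure, but the phrase ``after moving the $e^{-\lambda S}$ through'' hides a genuine gap. With your conventions $T_{2,\lambda}^{*}=\uptheta(-H)e^{-\lambda S}$, so $T_{2,\lambda}=e^{-\lambda S}\uptheta(-H)$, and it is $\|T_{2,\lambda}\|_{\sL^2\to\sL^\infty}$ that you need. Part~\ref{cor-2-8-i} controls $\uptheta(-H)e^{-\lambda S}$ (composition in the other order), and since $[S,\uptheta(-H)]=-\gamma^{2}[T,V]\ne 0$ (cf.\ Proposition~\ref{prop-2-6}) you cannot commute them; nor is there an a~priori $\sL^\infty\to\sL^\infty$ bound on $e^{-\lambda S}$ when $A\ne 0$. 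The same obstruction hits $T_{1,\lambda}$: after using $[(hD-A)\cdot\boldupsigma,\,e^{-\lambda S}]=0$ you obtain $T_{1,\lambda}=\upsigma_j\,e^{-\lambda S}\,((hD-A)\cdot\boldupsigma)\uptheta(-H)$, with $e^{-\lambda S}$ still to the \emph{left} of the operator whose $\sL^2\to\sL^\infty$ norm is controlled.

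The repair is short and stays within the paper's toolkit. What the proof of Proposition~\ref{prop-2-7} really establishes are the $L$--bounds $\|L\,\uptheta(-H)\|_{\sL^2\to\sL^2}\le C$ and $\|L\,((hD-A)\cdot\boldupsigma)\uptheta(-H)\|_{\sL^2\to\sL^2}\le C$, after which the passage from $L$--bounds to $(hD)^k$--bounds (the argument borrowed from Proposition~\ref{book_new-prop-27-2-6} of \cite{monsterbook}, via \eqref{2-21}--\eqref{2-25}) uses only properties of $A$, not of the particular vector. Since $e^{-\lambda S}$ commutes with $L$ and satisfies \eqref{2-29}, one gets
\[
\|L\,e^{-\lambda S}\uptheta(-H)\|=\|e^{-\lambda S}L\,\uptheta(-H)\|\le Ce^{-\lambda},
\qquad
\|L\,((hD-A)\cdot\boldupsigma)\,e^{-\lambda S}\uptheta(-H)\|\le Ce^{-\lambda},
\]
and re-running that passage yields $\|(hD)^k T_{i,\lambda}\|_{\sL^2\to\sL^2}\le Ce^{-\lambda}$ for $k=0,1,2$; Sobolev embedding then gives $\|T_{i,\lambda}\|_{\sL^2\to\sL^\infty}\le Ch^{-3/2}e^{-\lambda}$ as you asserted. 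With this patch your argument goes through unchanged.
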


\begin{corollary}\label{cor-2-9}
\begin{enumerate}[label=(\roman*), wide, labelindent=0pt]
\item\label{cor-2-9-i}
Under assumptions \textup{(\ref{2-18})}--\textup{(\ref{2-20})} $\|A_j\|_{\sC^{2-\delta}}\le C\kappa h^{-1}$ for any $\delta>0$.
\end{enumerate}
\end{corollary}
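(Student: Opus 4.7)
The plan is to combine the Euler--Lagrange equation (\ref{2-11}) for the minimizer with the pointwise bound on its right-hand side and standard interior elliptic regularity. From Proposition~\ref{prop-2-3},
\[
\Delta A_j = \tfrac{1}{2}\kappa h^2 \Phi_j,
\]
and Corollary~\ref{cor-2-8}\ref{cor-2-8-ii} yields $\|\Phi_j\|_{\sC}\le Ch^{-3}$, so that $\|\Delta A_j\|_{\sL^\infty}\le C\kappa h^{-1}$.

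Next, I would secure an a~priori $\sL^p$ bound on $A_j$ itself. By (\ref{2-19}) combined with (\ref{2-20}), $\|\partial A\|_{\sL^2}^2\le C\kappa h^2 M = C\varsigma h\le Ch$, hence $\|\partial A\|_{\sL^2}\le Ch^{\frac{1}{2}}$ and, by Sobolev embedding, $\|A\|_{\sL^6}\le Ch^{\frac{1}{2}}$. Interior Calder\'on--Zygmund estimates on balls of a fixed size then yield, for every finite $p$,
\[
\|A_j\|_{\W^{2,p}(B(0,1))}\le C_p\bigl(\|\Delta A_j\|_{\sL^p(B(0,2))}+\|A_j\|_{\sL^6(B(0,2))}\bigr)\le C_p\kappa h^{-1},
\]
with the first term dominating in the regime of interest. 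A final application of Sobolev embedding $\W^{2,p}\hookrightarrow \sC^{1,1-3/p}$ with $p=3/\delta$ (so $1-3/p=1-\delta$) then delivers $\|A_j\|_{\sC^{2-\delta}}\le C\kappa h^{-1}$ for any $\delta>0$, as required.

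The proof is essentially routine once the pointwise bound $\|\Phi_j\|_{\sC}\le Ch^{-3}$ is in hand; the real work is hidden upstream in Propositions~\ref{prop-2-5}--\ref{prop-2-7} and Corollary~\ref{cor-2-8}. The only mild subtlety I anticipate is that $\Phi_j$ in (\ref{2-11}) is nonlocal, involving resolvent-type operators $e^{-\lambda S}$; however, since Corollary~\ref{cor-2-8}\ref{cor-2-8-ii} already supplies a global pointwise bound, no further localization argument for $\Phi_j$ is needed when invoking elliptic regularity.
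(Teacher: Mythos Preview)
Your argument is correct and is precisely what the paper intends: Corollary~\ref{cor-2-9} is stated without proof, as an immediate consequence of the Euler--Lagrange equation (\ref{2-11}), the pointwise bound $\|\Phi_j\|_{\sC}\le Ch^{-3}$ from Corollary~\ref{cor-2-8}\ref{cor-2-8-ii}, and standard interior elliptic (Calder\'on--Zygmund plus Sobolev) regularity. Your parenthetical remark that the $\kappa h^{-1}$ term dominates the lower-order $\sL^6$ contribution is the only point requiring a moment's thought, and it is indeed harmless in the regimes the paper cares about.
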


\section{Microlocal analysis and local theory}
\label{sect-2-3}

\subsection{Microlocal analysis unleashed}
\label{sect-2-3-1}

Then we can apply all arguments of Subsection~\ref{book_new-sect-27-2-2}\footnote{\label{foot-8} Namely of Subsubsections~\emph{\ref{book_new-sect-27-2-2-1}.1. \nameref{book_new-sect-27-2-2-1}\/},  \emph{\ref{book_new-sect-27-2-2-2}.2. \nameref{book_new-sect-27-2-2-2}\/} and \emph{\ref{book_new-sect-27-2-2-3}.3. \nameref{book_new-sect-27-2-2-3}\/}.} of \cite{monsterbook}, even if expression for $\Phi_j$ differs. Indeed, observe first that we can restrict ourselves by $0\le \lambda \le c|\log h|$. Then, using our standard arguments based on the analysis of the propagation of singularities, we can prove that the Tauberian expression with $T=h^{1-\delta}$ for $\Phi_j$ has an error $O(h^{-2})$ provided 
\begin{equation}
V(x)\asymp 1.
\label{2-31}
\end{equation}

Then our standard trick with the freezing coefficients works and with the same $O(h^{-2})$ error we can replace $\Phi_j(x)$ by its Weyl expression, i.e. expression we obtain if replace operators by their symbols, depending on $x$ and $\xi$,  integrating by $d\xi$ and multiplying by $(2\pi h)^{-3}$. However due to skew-symmetry with respect to $\xi- A(x)$, this Weyl expression is $0$, and $\Phi_j (x)=O(h^{-2})$. 

Finally, we can get rid of assumption (\ref{2-31}) by the standard rescaling arguments. We leave all the details to the reader.

\subsection{Local theory and rescaling}
\label{sect-2-3-2}
Then we can apply all arguments of 
Subsection~\ref{book_new-sect-27-2-3}\,\footnote{\label{foot-9} Namely, Subsubsections~\emph{\ref{book_new-sect-27-2-3-1}.1. \nameref{book_new-sect-27-2-3-1}\/} and
\emph{\ref{book_new-sect-27-2-3-2}.2. \nameref{book_new-sect-27-2-3-2}\/}.} of \cite{monsterbook}.
As a result we arrive under assumption (\ref{2-20}) to the trace formula\footnote{\label{foot-10} In the trace formula  ``$\Weyl_1$'' is given by the relativistic expression, $-P^\RTF (W+\nu)$.} with the remainder estimate $O(h^{-1})$ and to estimate $\|\partial A\|=O(\kappa^{1/2}h^{1/2})$. 
 
Moreover, under the standard assumption of the global nature we arrive to the trace formula with the remainder estimate $o(h^{-1})$ (but it will have the Schwinger-type correction term) and to estimate $\|\partial A\|=o(\kappa^{1/2}h^{1/2})$. 

Finally, we an apply all arguments of Subsection~\ref{book_new-sect-27-2-4}\,\footnote{\label{foot-11} Namely, Subsubsection \emph{\ref{book_new-sect-27-2-4-1}.1. \nameref{book_new-sect-27-2-4-1}\/}
and Subsubsection  \emph{\ref{book_new-sect-27-2-4-2}.2. \nameref{book_new-sect-27-2-4-2}\/}.} of \cite{monsterbook}
and we weaken assumption~(\ref{2-20}), recovering the same estimates as before. Again, we leave all the details to the reader.

\chapter{Global trace asymptotics in the case of Coulomb-like singularities}
\label{sect-3}

\section{Estimates to minimizer}
\label{sect-3-1}

Let us return to the original settings, with Coulomb-like singularities and parameters $Z_m, \alpha, \beta$. At the moment we consider the one-particle Hamiltonian. Let us deal first with the vicinity of $\y_m$.

Then we scale like in  Section~\ref{book_new-sect-27-3} of \cite{monsterbook}: 
$x\mapsto Z^{\frac{1}{3}}x$,  $\tau \mapsto Z^{-\frac{4}{3}}\tau$, $A\mapsto Z^{-\frac{2}{3}}A$, 
$\beta\mapsto \beta Z^{\frac{3}{3}}$, arriving to the semiclassical problem with Coulomb singularities  
$z_m|x-\y_m|^{-1}$ ($z_m=Z_mZ^{-1}$),  with $h= Z^{-\frac{1}{3}}$ and with  $\kappa =\alpha Z^{\frac{2}{3}}$. In particular,
$\E^*$ is a minimum with respect to $A$ of 
\begin{equation}
\E(A)\coloneqq \Tr (H_{A,W+\tau}^-)+\kappa^{-1}h^{-2}\|\partial A\|^2.
\label{3-1}
\end{equation}

Let us follow arguments of Subsubsection~\emph{\ref{book_new-sect-27-3-2-1}.1\ \nameref{book_new-sect-27-3-2-1}\/}.
Observe first that  the estimate from above is 
\begin{equation}
\E^* \le h^{-3}\int \Weyl_1(x)\,dx + Ch^{-2};
\label{3-2}
\end{equation}
we simply take $A=0$ and refer to \cite{ivrii:rela}\footnote{\label{foot-12} In the original settings the remainder estimate  would be $O(Z^2)$ exactly as in the non-relativistic case.}.

Consider now estimate from below and apply $\ell$-admissible partition exactly like in Subsection~\ref{book_new-sect-27-3-2} of \cite{monsterbook}. Then, according to the previous section, for any element of partition with 
$\ell \ge ch^{-2}$ ($\ell \ge cZ_m^{-1}$ in the original settings) its contribution is estimated from below by the corresponding Weyl expression minus $Ch^{-1}\zeta ^2  \times \zeta \ell =Ch^{-1}\zeta^3\ell^{-1}$ and summation with respect to these elements returns $\cE^\TF$ minus $Ch^{-1}\zeta^3\ell |_{\ell = h^{-2}}$, i.e. 
\begin{equation}
h^{-3}\int \Weyl_1(x)\,dx - Ch^{-2}
\label{3-3}
\end{equation}
because the contribution of the zone $\cZ_0= \{x\colon \ell (x)\le ch^{-2}\}$ to the main term is $O(h^{-2})$.

On the other hand, the contribution of $\cZ_0$ is $-Ch^{-2}$. Indeed, scale first $x\mapsto h^{-2}x$, $\tau\mapsto h^{2}\tau$, $h\mapsto 1$, $A\mapsto h A$, $\beta \mapsto \gamma=\beta h^{-1}$, and the Coulomb singularity remains the same while the magnetic energy becomes $\kappa^{-1}\|\partial A\|^2$. Observe that $\gamma=\beta_\orig Z_m$\,\footnote{\label{foot-13} Considering vicinity of $\y_m$ it is more convenient to take the original rescaling with $Z$ replaced by $Z_m$, and therefore $z_m=1$.}, so (\ref{1-8}), (\ref{1-11}) become
\begin{equation}
\beta \le 2\pi^{-1}-\epsilon,\qquad \kappa \le \kappa^*(2\pi^{-1}-\beta).
\label{3-4}
\end{equation}
Then we can apply a ``singular magnetic Daubechies inequality'' (\ref{A-3}) and repeat all arguments of the regular case in a simple case of $h=1$. There will be an extra terms $O(1)$ and $-C(1-\pi \gamma /2)^{-\frac{3}{2}}\|\partial A\|^2$ and that latter term requires (\ref{1-11}).

Now we conclude that Proposition~\ref{book_new-prop-27-3-1} of \cite{monsterbook} holds:

\begin{proposition-foot}\footnotetext{\label{foot-14} Cf. Proposition~\ref{book_new-prop-27-3-1} of \cite{monsterbook}.}\label{prop-3-1}
In our framework
$\kappa \le \kappa^*$. Then the near-minimizer $A$ satisfies
\begin{gather}
|\int \bigl(\tr e_{A,1}(x,x,0)-\Weyl_1(x)\bigr)\,dx|\le Ch^{-2}
\label{3-5}\\\\
\shortintertext{and}
\|\partial A\| \le C\kappa^{\frac{1}{2}}.
\label{3-6}
\end{gather}
\end{proposition-foot}

It allows us to repeat arguments of the proof Proposition~\ref{prop-2-2} and to prove

\begin{proposition-foot}\footnotetext{\label{foot-15} Cf. Proposition~\ref{book_new-prop-27-3-2} of \cite{monsterbook}.}\label{prop-3-2}
In our framework there exists a minimizer  $A$\,\footref{foot-4}.
\end{proposition-foot}

Now we can repeat arguments of  Subsubsection~\emph{\ref{book_new-sect-27-3-2-2}.2\ \nameref{book_new-sect-27-3-2-2}\/} of \cite{monsterbook}, albeit with the right-hand expression of (\ref{book_new-27-3-14}) given now by (\ref{2-11}) and to prove the claim (\ref{book_new-27-3-28}), which is marginally stronger than
\begin{equation}
\|\partial^2 A\|_{\sL^\infty (B(0,1-\epsilon))}\le C\kappa^{\frac{1}{2}}h^{-\delta}.
\label{3-7}
\end{equation}

Then we can repeat arguments of  Subsubsection~\emph{\ref{book_new-sect-27-3-2-3}.3\ \nameref{book_new-sect-27-3-2-3}\/}  of \cite{monsterbook} and recover Propositions~\ref{book_new-prop-27-3-4}, \ref{book_new-prop-27-3-6} and \ref{book_new-prop-27-3-7}, estimating $A$ and its derivatives as $\ell (x)\lesssim 1$:

\begin{proposition-foot}\footnotetext{\label{foot-16} Cf. Proposition~\ref{book_new-prop-27-3-7}\ref{book_new-prop-27-3-7-i} of \cite{monsterbook}.} \label{prop-3-3}
In our framework if $\ell (x)\ge \ell_*\coloneqq h^2$, then
\begin{gather}
|A|\le C\kappa \ell^{-\frac{1}{2}},\qquad
|\partial A|\le C\kappa \ell ^{-\frac{3}{2}}
\label{3-8}\\\\
\shortintertext{and}
|\partial A (x)-\partial A(y)|\le C_\theta\kappa \ell^{-\frac{3}{2}-\theta}|x-y|^\theta \qquad \text{as\ \ } |x-y|\le \frac{1}{2}\ell (x)
\label{3-9}
\end{gather}
for any $\theta \in (0,1)$.
\end{proposition-foot}

Consider now the non-semiclassical  zone $\{x\colon \ell(x)\lesssim \ell_*\}$, which contains the relativistic zone
$\{x\colon \ell(x)\lesssim \bar{\ell}\coloneqq \gamma h\}$. Using arguments of the proof of Proposition~\ref{rel-prop-3-4} of \cite{ivrii:rela}, but additionally taking care of the magnetic field using arguments of the proofs of 
Propositions~\ref{prop-2-5},~\ref{prop-2-6} and~\ref{prop-2-7} (we leave all details to the reader),
we arrive to 

\begin{proposition}\label{prop-3-4}
In our framework $H_{W,A}\ge C_0\ell_*^{-1}$ and $e(x,x,\lambda) \le C\ell_*^{-3}$ for $\ell(x)\le c\ell_*$ and 
$|\lambda|\le Ch^{-2}$.
\end{proposition}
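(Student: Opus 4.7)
The plan is to reduce to a unit-scale one-particle problem via the rescaling $x\mapsto h^{-2}x$, $\tau\mapsto h^2\tau$, $A\mapsto hA$, $\beta\mapsto\gamma=\beta h^{-1}$ introduced in Section~\ref{sect-3-1}. After rescaling, the semiclassical parameter is normalized to $1$, the Coulomb singularity $z_m/|x-\y_m|$ is preserved, the magnetic penalty takes the form $\kappa^{-1}\|\partial A\|^2$, and the two claims translate to $H_{W,A}\ge C_0$ and $e(x,x,\lambda)\le C$ for $|x-\y_m|\le c$ and $|\lambda|\le C$, respectively.

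For the lower bound, I would follow the proof of Proposition~\ref{rel-prop-3-4} of \cite{ivrii:rela}, substituting the singular magnetic Daubechies inequality (\ref{A-3}) for the non-magnetic inequality used there. Assumption (\ref{1-8}) on $\beta$ ensures that (\ref{A-3}) delivers a strictly positive coefficient in front of the relativistic kinetic energy after the Coulomb term is absorbed, while the calibration (\ref{1-11}) of $\kappa$ is exactly what is needed to absorb the accompanying $-C(1-\pi\gamma/2)^{-3/2}\|\partial A\|^2$ term into the magnetic penalty $\kappa^{-1}\|\partial A\|^2$ (cf.\ the paragraph after (\ref{3-4})). The behavior of the minimizer $A$ at the outer boundary of the zone, where $\ell(x)\asymp\ell_*$, is controlled by Proposition~\ref{prop-3-3} and allows a clean separation between the singular ball and the semiclassical region handled in Chapter~\ref{sect-2}.

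For the kernel bound, I would adapt the functional-analytic machinery of Propositions~\ref{prop-2-5}--\ref{prop-2-7} to the unit-scale singular setting. Once $H\ge C_0$ is established, for $u=\uptheta(\lambda-H)f$ with $|\lambda|\le C$ one has $C_0\le H\le \lambda$ on $\operatorname{Ran}\uptheta(\lambda-H)$, and $\|Su\|$, hence $\|L^{\frac{1}{2}}u\|$ and $\|((hD-A)\cdot\boldupsigma)u\|$, are uniformly bounded. Iterating via the commutator identities from the proofs of Propositions~\ref{prop-2-6} and~\ref{prop-2-7}, together with the smoothness of $A$ on the shell $\ell(x)\gtrsim\ell_*$ from Proposition~\ref{prop-3-3}, one builds up $\|D^k u\|\le C\|f\|$ for $k=0,1,2$, and the pointwise bound on $e(x,x,\lambda)$ then follows from Sobolev embedding as in Corollary~\ref{cor-2-8}.

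The main obstacle, as foreshadowed in the introduction, is the commutator $[S,V]$ with the singular Coulomb potential: Proposition~\ref{prop-2-6} requires $\|V\|_{\sC^2}<\infty$, which fails at the nucleus, so near the origin one must replace its commutator bound by a relativistic Hardy-type inequality compatible with the magnetic Pauli setting. It is precisely here that the strict margins in (\ref{1-8}) and (\ref{1-11}) are crucial, and where tracking the magnetic field at every step of the argument of \cite{ivrii:rela} marks the substantive departure of the present proof from the non-magnetic case.
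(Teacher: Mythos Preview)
Your proposal is correct and follows essentially the same route as the paper. The paper's own ``proof'' is the sentence immediately preceding the proposition: it merely instructs the reader to combine the arguments of Proposition~\ref{rel-prop-3-4} of \cite{ivrii:rela} with the magnetic-field estimates of Propositions~\ref{prop-2-5}--\ref{prop-2-7}, leaving all details to the reader; you have supplied exactly those details, including the rescaling from Section~\ref{sect-3-1}, the role of the singular magnetic Daubechies inequality~(\ref{A-3}) together with assumptions (\ref{1-8}) and (\ref{1-11}) for the operator lower bound, and the Sobolev step from Corollary~\ref{cor-2-8} for the diagonal kernel estimate.
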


\begin{remark}\label{rem-3-5}
\begin{enumerate}[label=(\roman*), wide, labelindent=0pt]
\item\label{rem-3-5-i}
Then in the original settings $H_{W,A}\ge C_0Z^{-2}$ and $e(x,x,\lambda) \le CZ^3$ for $\ell(x)\le cZ^{-1}$ and 
$|\lambda|\le C_0Z^2$.
\item\label{rem-3-5-ii}
We have a better estimate than (\ref{rel-3-11}) of \cite{ivrii:rela} due to assumptions (\ref{1-8}) and (\ref{1-11}).
\end{enumerate}
\end{remark} 

Next, we  follow arguments of Subsubsection~\emph{\ref{book_new-sect-27-3-2-4}.4\ \nameref{book_new-sect-27-3-2-4}\/} of \cite{monsterbook}  and prove (again, leaving details to the reader)

\begin{proposition-foot}\footnotetext{\label{foot-17} Cf. Proposition~\ref{book_new-prop-27-3-9} of \cite{monsterbook}.}
\label{prop-3-6}
In our framework 
\begin{gather}
|A|\le C\kappa \ell^{-2},\qquad
|\partial A|\le C\kappa \ell ^{-3}\label{3-10}\\\\
\shortintertext{and}
|\partial A (x)-\partial A(y)|\le C_\theta\kappa \ell^{-3-\theta}|x-y|^\theta \qquad \text{as\ \ } |x-y|\le \frac{1}{2}\ell (x)
\label{3-11}
\end{gather}
if $\ell(x)\ge 1$ (for all $\theta \in (0,1)$).
\end{proposition-foot}
 
\section{Trace estimates}
\label{sect-3-2}

Next we can go after trace asymptotics. Recall that we are dealing with the rescaled operator. Let $a$ be the minimal distance between nuclei (after rescaling), capped by $1$; recall that $a\ge \ell_*$. 

After we estimated $A$ for $\ell(x)\lesssim 1$ in Proposition~\ref{prop-3-1} and $e(x,x,\lambda)$ for 
$\ell(x)\lesssim \ell_*=h^{-2}$, we can apply arguments of Subsection~\ref{book_new-sect-27-3-3} of \cite{monsterbook} and arrive to 

\begin{proposition-foot}\footnotetext{\label{foot-18} Cf. Proposition~\ref{book_new-prop-27-3-11} of \cite{monsterbook}.}\label{prop-3-7}
In our framework let $\psi $ be $a$-admissible and supported in $\frac{1}{2}a$-vicinity of $\y_m$, let $\varphi $ be $\ell_*$-admissible, supported in $2\ell_*$-vicinity and equal $1$ in $\ell_*$-vicinity of $\y_m$, and let 
$V^0=Z_m|x|^{-1}$. Then
\begin{multline}
\Tr \bigl(\psi (H^-_{A,V} - H^-_{A,V^0})\psi\bigr)= \\[3pt]
\int \bigl(\Weyl_1(x) -\Weyl_1^0(x)\bigr)(1-\varphi(x))\, \,dx + O\bigl(a^{-\frac{1}{3}}h^{-\frac{4}{3}}\bigr).
\label{3-12}
\end{multline}
\end{proposition-foot}

\begin{remark}\label{rem-3-8}
Here and in Proposition~\ref{prop-3-9} $\Weyl$ and $\Weyl_1$ are  defined for the relativistic operator (i.e.
$\Weyl=P^{\RTF\,\prime} (V)$ and $\Weyl=-P^\RTF (V)$), but following arguments of \ref{rel-prop-3-6} of \cite{ivrii:rela}, we can replace it by those for non-relativistic operator (i.e.
$\Weyl=P^{\TF\,\prime} (V)$ and $\Weyl=-P^\TF (V)$) and then skip the factor 
$(1-\varphi(x))$.
\end{remark}

Moreover, applying arguments of Subsection~\ref{book_new-sect-27-3-4} of \cite{monsterbook} we arrive to

\begin{proposition-foot}\footnotetext{\label{foot-19} Cf. Proposition~\ref{book_new-prop-27-3-16} of \cite{monsterbook}.}\label{prop-3-9}
\begin{enumerate}[label=(\roman*), wide, labelindent=0pt]
\item \label{prop-3-9-i}
In the framework of Proposition~\ref{prop-3-7}
\begin{multline}
\Tr \bigl(\psi (H^-_{A,V} - H^-_{A,V^0})\psi\bigr)= \\[3pt]
\int \bigl(\Weyl_1(x) -\Weyl_1^0(x)\bigr)\,\psi^2(x)(1-\varphi(x)) \,dx + O\bigl(h^{-\frac{4}{3}}a^{-\frac{1}{3}}\kappa |\log \kappa|^{\frac{1}{3}}+
h^{-1}a^{-\frac{1}{2}}\bigr).
\label{3-13}
\end{multline}

\item \label{prop-3-9-ii}
In particular, if
\begin{equation}
\kappa \le c a^{-\frac{1}{6}}h^{\frac{1}{3}} |\log ah^{-2}|^{-\frac{1}{3}},
\label{3-14}
\end{equation}
then the error in \textup{(\ref{3-13})} does not exceed $Ch^{-1}a^{-\frac{1}{2}}$ exactly as in the case without magnetic field.
\end{enumerate}
\end{proposition-foot}

Next consider the case of exactly Coulomb potential $V=Z|x|^{-1}$ and $\nu=0$. Then

\begin{proposition-foot}\footnotetext{\label{foot-20} Cf. Proposition~\ref{book_new-prop-27-3-18} of \cite{monsterbook}.}\label{prop-3-10}
Let $V=Z |x|^{-1}$, $h>0 $, $Z>0$, and \textup{(\ref{1-8})} and \textup{(\ref{1-11})} be fulfilled.  Then
\begin{enumerate}[label=(\roman*), wide, labelindent=0pt]
\item\label{prop-3-10-i}
The following limit exists\,\footnote{\label{foot-21} Cf. \textup{(\ref{book_new-27-3-71})} of \cite{monsterbook} and \textup{(\ref{rel-3-18})} of \cite{ivrii:rela}.}
\begin{multline}
\lim_{r\to\infty} \biggl( \inf_A
\Bigl( \Tr \bigl( (\phi_r H_{A,V} \phi_r)^-\bigr) + \frac{1}{\kappa h^2} \int |\partial A|^2 \,dx  \Bigr)\\
- \int  \Weyl_1 (x)\phi_r ^2(x)\,dx \biggr) \Fed 2 Z^2 h^{-2} S(Z\kappa, Z\beta).
\label{3-15}
\end{multline}

\item\label{prop-27-3-18-ii}
And it coincides with \textup{(\ref{book_new-27-3-72})}
and  also with \textup{(\ref{book_new-27-3-73})} of \cite{monsterbook}.

\item\label{prop-3-10-iv}
We also can replace in Statement~\ref{prop-3-10-i}
$\Tr \bigl( (\phi_r H_{A,V} \phi_r)^-)$ by
$\Tr \bigl( \phi_r H^-_{A,V} \phi_r\bigr)$.
\end{enumerate}
Here $\phi \in \sC_0^\infty (B(0,1))$, $\phi=1$ in $B(0,\frac{1}{2})$, $\phi_r=\phi(x/r)$ and $\Weyl$ and $\Weyl_1$ are defined for non-relativistic operator.
\end{proposition-foot}

Then we also arrive to 

\begin{proposition-foot}\footnotetext{\label{foot-22} Cf. Proposition~\ref{book_new-prop-27-3-20}  of \cite{monsterbook} and Remark~\ref{rel-rem-3-8} of \cite{ivrii:rela}.}\label{prop-3-11}
In the framework of Proposition~\ref{prop-3-10} for $0<\kappa <\kappa'$, $\beta<\beta'$
\begin{gather}
S(\kappa',\beta)\le S(\kappa,\beta) \le S(\kappa',\beta) + C \kappa '(\kappa^{-1}-\kappa'^{-1}),\label{3-16}\\
S(\kappa,\beta)\le S(\kappa',\beta').
\label{3-17}
\end{gather}
\end{proposition-foot}

Then, in the ``atomic'' case $M=1$ we arrive instantly to the following theorem:

\begin{theorem-foot}\footnotetext{\label{foot-23} Cf. Theorem~\ref{book_new-thm-27-3-22} of \cite{monsterbook}
and Propositions~\ref{rel-prop-3-9} and~\ref{rel-prop-3-10} of \cite{ivrii:rela}.}\label{thm-3-12}
Let $M=1$ and \textup{(\ref{1-8})} and \textup{(\ref{1-11})} be fulfilled. Then
\begin{enumerate}[label=(\roman*), wide, labelindent=0pt]
\item\label{thm-3-12-i}
The following asymptotics holds
\begin{equation}
\E^* = \int \Weyl_1(x)\,dx + 2 z^2 S(z \kappa, z\beta) h^{-2}+
O(h^{-\frac{4}{3}}\kappa |\log \kappa|^{\frac{1}{3}}+h^{-1}).
\label{3-18}
\end{equation}

\item\label{thm-3-12-ii}
If $\kappa =o (h^{\frac{1}{3}}|\log h|^{-\frac{1}{3}})$,  then
\begin{equation}
\E^* = \int \Weyl^*_1(x)\,dx + 2 z^2 S(z \kappa, z\beta) h^{-2}+ o( h^{-1}),
\label{3-19}
\end{equation}
in which case $\Weyl^*_1$ must in addition to $-h^{-3}P^\TF (W+\nu)$ contain  the Schwinger correction, and also the relativistic correction.
\end{enumerate}
\end{theorem-foot}

Next, using arguments Subsection~\ref{book_new-sect-27-3-6} of \cite{monsterbook}, in particular, decoupling of singularities (which is needed only in the case ofthe self-generated magnetic field), we arrive to

\begin{theorem-foot}\footnotetext{\label{foot-24} Cf. Theorem~\ref{book_new-thm-27-3-24} of \cite{monsterbook}.}\label{thm-3-13}
Let $M\ge 2$, $\kappa \le \kappa^*$ and  \textup{(\ref{1-8})} and \textup{(\ref{1-11})} be fulfilled. Then
\begin{enumerate}[label=(\roman*), wide, labelindent=0pt]
\item\label{thm-3-13-i}
The following asymptotics holds
\begin{equation}
\E^* = \int \Weyl_1(x)\,dx + 2 \sum_{1\le m\le M} z_m^2 S(z_m \kappa,z_m\beta) h^{-2}+ O(R_1+R_2)
\label{3-20}
\end{equation}
with
\begin{align}
R_1\, =\,&\left\{\begin{aligned}
&h^{-1}+\kappa |\log \kappa|^{\frac{1}{3}}h^{-\frac{4}{3}}  &&\text{if\ \ } a\ge 1, \\
&a^{-\frac{1}{2}}h^{-1}+\kappa |\log \kappa|^{\frac{1}{3}}a^{-\frac{1}{3}}h^{-\frac{4}{3}}\qquad
&&\text{if\ \ } h^2\le a\le 1
\end{aligned}\right.
\label{3-21}\\\\
\shortintertext{and}
R_2=\kappa h^{-2}
&\left\{\begin{aligned}
&a^{-3} &&\text{if\ \ } a\ge |\log h|^{\frac{1}{3}},\\
&|\log h^2a^{-1}|^{-1} \qquad &&\text{if\ \ }  h^2\le a \le |\log h|^{\frac{1}{3}}.
\end{aligned}\right.
\label{3-22}
\end{align}

\item\label{thm-3-13-ii}
If $\kappa =o( h^{\frac{1}{3}}|\log h|^{-\frac{1}{3}})$, $\kappa a^{-3}=o(h)$ and $a^{-1}=o(1)$, then
\begin{equation}
\E^* = \int \Weyl^*_1(x)\,dx + 2 \sum_{1\le m\le M} z_m^2 S(z_m \kappa, z_m\beta) h^{-2}+ o(h^{-1}).
\label{3-23}
\end{equation}
\end{enumerate}
\end{theorem-foot}

\chapter{Main results}
\label{sect-4}

\section{Asymptotics of the ground state energ}
\label{sect-4-1}

Now we can apply arguments of Section~\ref{book_new-sect-27-4}. In addition to (\ref{1-8}) and (\ref{1-11}) we assume that
\begin{align}
&d\coloneqq \min_{1\le m<m'\le M}|\y_m-\y_{m'}|\ge Z^{-1},\label{4-1}\\
&N\asymp Z_1\asymp \ldots\asymp Z_M.\label{4-2}
\end{align}
Then the estimates from below follow immediately from the trace asymptotics, while for the estimate from above we need also estimate $\N$ and miscellaneous $\D$-terems. We leave all the details to the reader.
 
\begin{theorem-foot}\footnotetext{\label{foot-25} Cf. Theorem~\ref{book_new-thm-27-4-3}  of \cite{monsterbook}.}\label{thm-4-1}
\begin{enumerate}[label=(\roman*), wide, labelindent=0pt]
\item\label{thm-4-1-i}
Under assumptions \textup{(\ref{1-8})}, \textup{(\ref{1-11})}, \textup{(\ref{4-1})} and \textup{(\ref{4-2})} the following asymptotics holds
\begin{equation}
\E^*_N = \cE^\TF_N + \sum_{1\le m\le M} 2Z_m^2 S(\alpha Z_m, \beta Z_m) + O\bigl(Z^{\frac{4}{3}}(R_1+R_2)\bigr)
\label{4-3}
\end{equation}
with $R_1$ and $R_2$ defined by \textup{(\ref{3-21})} and \textup{(\ref{3-22})} respectively with $\kappa =\alpha Z$, $h=Z^{-\frac{1}{3}}$ and $a=Z^{\frac{1}{3}}d$, $d$ is defined by \textup{(\ref{4-1})}, $d=\infty$ for $M=1$.

\item\label{thm-4-1-ii}
In particular, under assumption $d\gtrsim Z^{-\frac{1}{3}}$
the following asymptotics holds
\begin{multline}
\E^*_N = \cE^\TF_N + \sum_{1\le m\le M} 2Z_m^2 S(\alpha Z_m,\beta Z_m) +\\
O\bigl(\alpha |\log (\alpha Z)|^{\frac{1}{3}}  Z^{\frac{25}{9}}+ Z^{\frac{5}{3}}
+ \alpha d^{-3}Z ^2 \bigr).
\label{4-4}
\end{multline}
\end{enumerate}
\end{theorem-foot}

\begin{theorem-foot}\footnotetext{\label{foot-26} Cf. Theorem~\ref{book_new-thm-27-4-4}  of \cite{monsterbook}.}\label{thm-4-2}

\begin{enumerate}[label=(\roman*), wide, labelindent=0pt]
\item\label{thm-4-2-i}
Let assumptions  \textup{(\ref{1-8})}, \textup{(\ref{1-11})}, \textup{(\ref{4-1})} and \textup{(\ref{4-2})}  be fulfilled and let $\Psi=\Psi_{\mathbf{A}}$ be a ground state for a near optimizer $\mathbf{A}$ of the original multiparticle problem. Then
\begin{equation}
\D(\rho_\Psi-\rho^\TF,\,  \rho_\Psi-\rho^\TF)\le CZ^{\frac{5}{3}}.
\label{4-5}
\end{equation}
\item\label{thm-4-2-ii}
Furthermore, if $d\ge Z^{-\frac{1}{3}}$, then
\begin{equation}
\D(\rho_\Psi-\rho^\TF,\,  \rho_\Psi-\rho^\TF)\le CZ^{\frac{5}{3}}\bigl(Z^{-\delta} +(dZ^{\frac{1}{3}})^{-\delta}+ (\alpha Z)^{\delta}\bigr).
\label{4-6}
\end{equation}
\end{enumerate}
\end{theorem-foot}

\begin{theorem-foot}\footnotetext{\label{foot-27} Cf. Theorem~\ref{book_new-thm-27-4-6}  of \cite{monsterbook}.}\label{thm-4-3}
Let assumptions  \textup{(\ref{1-8})}, \textup{(\ref{1-11})}, \textup{(\ref{4-1})} and \textup{(\ref{4-2})}  be fulfilled, and let
$\alpha  \le Z^{-\frac{10}{9}}|\log Z|^{-\frac{1}{3}}$. Then
\begin{multline}
\E^*_N = \cE^\TF_N + \sum_{1\le m\le M} 2Z_m^2 S(\alpha Z_m,\beta Z_m) +\Dirac+\Schwinger +\mathsf{RCT}+\\
O\bigl(\alpha |\log (\alpha Z)|^{\frac{1}{3}}  Z^{\frac{25}{9}}+ Z^{\frac{5}{3}-\delta} +  \alpha d^{-3}Z ^2 \bigr)
\label{4-7}
\end{multline}
where $\Dirac$ and $\Schwinger$ are \emph{Dirac\/} and \emph{Schwinger correction terms\/} defined exactly as in non-magnetic non-relativistic case by \textup{(\ref{book_new-25-1-29})} and  \textup{(\ref{book_new-25-1-30})} of \cite{monsterbook} respectively, and $\mathsf{RCT}$ is relativistic correction term, defined as in the non-magnetic case by \textup{(\ref{rel-3-23})} of \cite{ivrii:rela}.
\end{theorem-foot}

\begin{theorem-foot}\footnotetext{\label{foot-28} Cf. Theorem~\ref{book_new-thm-27-4-7} of \cite{monsterbook}.}\label{thm-4-4}
Let assumptions  \textup{(\ref{1-8})}, \textup{(\ref{1-11})} and \textup{(\ref{4-2})} be fulfilled. Let us consider $\y_m=\y_m^*$ minimizing the full energy
\begin{gather}
\widehat{\E}^*_N\coloneqq   \E^*_N +\sum _{1\le m <m'\le M} Z_mZ_{m'}|\y_m-\y_{m'}|^{-1}.
\label{4-8}\\\\
\shortintertext{Then}
d\ge \min \bigl( Z^{-\frac{5}{21}+\delta},\,
Z^{-\frac{5}{21}}(\alpha Z)^{-\delta},\,
\alpha^{-\frac{1}{4}}Z^{-\frac{1}{2}}\bigr)
\label{4-9}
\end{gather}
and in the remainder estimates in \textup{(\ref{4-4})} and~\textup{(\ref{4-7})} one can skip $d$-connected terms; so we arrive to
\begin{equation}
\E^*_N = \cE^\TF_N + \sum_{1\le m\le M} 2Z_m^2 S(\alpha Z_m,\beta Z_m) +
O\bigl(\alpha |\log (\alpha Z)|^{\frac{1}{3}}  Z^{\frac{25}{9}}+ Z^{\frac{5}{3}}\bigr)
\label{4-10}
\end{equation}
and
\begin{multline}
\E^*_N = \cE^\TF_N + \sum_{1\le m\le M} 2Z_m^2 S(\alpha Z_m) +\Dirac+\Schwinger +\mathsf{RCT}+\\
O\bigl(\alpha |\log (\alpha Z)|^{\frac{1}{3}}  Z^{\frac{25}{9}}+ Z^{\frac{5}{3}-\delta} \bigr)
\label{4-11}
\end{multline}
respectively and also the same asymptotics with $\widehat{\E}^*_N$ and $\widehat{\cE}^\TF_N$ instead of $\E^*_N$ and $\cE^\TF_N$.
\end{theorem-foot}

\section{Related problems}
\label{sect-4-2}

After Theorem~\ref{thm-4-6} is proven, we can apply arguments of Sections~\ref{book_new-sect-25-5} and \ref{book_new-sect-25-6} of \cite{monsterbook}.

\begin{theorem-foot}\footnotetext{\label{foot-29} Cf. Theorem~\ref{book_new-thm-27-5-1}  of \cite{monsterbook}.}\label{thm-4-5}
Let assumptions  \textup{(\ref{1-8})}, \textup{(\ref{1-11})} and \textup{(\ref{4-2})} be fulfilled.
\begin{enumerate}[label=(\roman*), wide, labelindent=0pt]
\item\label{thm-4-5-i}
In the framework of the fixed nuclei model let us assume that \newline $\I^*_N\coloneqq   \E^*_{N-1}-\E^*_N>0$. Then
\begin{equation}
(N-Z)_+\le CZ^{\frac{5}{7}}
\left\{\begin{aligned}
&1 \qquad &&\text{if\ \ } d\le Z^{-\frac{1}{3}},\\
&Z^{-\delta}+ (dZ^{\frac{1}{3}})^{-\delta}+(\alpha Z)^\delta
&&\text{if\ \ } d\ge Z^{-\frac{1}{3}}.
\end{aligned}\right.
\label{4-12}
\end{equation}

\item\label{thm-4-5-ii}
In particular, for a single atom and for molecule with
$d\ge Z^{-\frac{1}{3}+\delta}$
\begin{equation}
(N-Z)_+\le Z^{\frac{5}{7}}\bigl(Z^{-\delta}+(\alpha Z)^\delta\bigr).
\label{4-13}
\end{equation}

\item\label{thm-4-5-iii}
In the framework of  the free nuclei model let us assume that
$\widehat{\I}^*_N\coloneqq   \widehat{\E}^*_{N-1}-\widehat{\E}^*_N>0$. Then estimate \textup{(\ref{4-13})} holds.
\end{enumerate}
\end{theorem-foot}

\begin{theorem-foot}\footnotetext{\label{foot-30} Cf. Theorem~\ref{book_new-thm-27-5-2}  of \cite{monsterbook}.}\label{thm-4-6}
Let assumptions  \textup{(\ref{1-8})}, \textup{(\ref{1-11})} and \textup{(\ref{4-2})} be fulfilled and let
$N\ge Z-C_0 Z^{\frac{5}{7}}$. Then
\begin{enumerate}[label=(\roman*), wide, labelindent=0pt]
\item\label{thm-4-6-i}
In the framework of the fixed nuclei model
\begin{equation}
\I^*_N \le  CZ^{\frac{20}{21}}.
\label{4-14}
\end{equation}
\item\label{thm-4-6-ii}
In the framework of the free nuclei model with
$N\ge Z-C_0 Z^{\frac{5}{7}}\bigl(Z^{-\delta}+{\alpha Z}^{\delta}\bigr)$
\begin{equation}
\widehat{\I}_N^*\coloneqq   \widehat{\E}^*_{N-1}- \widehat{\E}^*_{N-1} \le Z^{\frac{20}{21}}\bigl(Z^{-\delta'}+(\alpha Z)^{\delta'}\bigr).
\label{4-15}
\end{equation}
\end{enumerate}
\end{theorem-foot}

\begin{theorem-foot}\footnotetext{\label{foot-31} Cf. Theorem~\ref{book_new-thm-27-5-3} of \cite{monsterbook}.}\label{thm-4-7} Let assumptions  \textup{(\ref{1-8})}, \textup{(\ref{1-11})} and \textup{(\ref{4-2})} be fulfilled and let  $N\le Z-C_0 Z^{\frac{5}{7}}$. Then in the framework of the fixed nuclei model under assumption $b\ge C_1(N-Z)^{-\frac{1}{3}}$
\begin{equation}
(\I^*_N +\nu )_+\le C (Z-N)^{\frac{17}{18}}Z^{\frac{5}{18}}
\left\{\begin{aligned}
&1 \qquad &&\text{if\ \ } d\le Z^{-\frac{1}{3}},\\
&Z^{-\delta}+ (dZ^{\frac{1}{3}})^{-\delta}
&&\text{if\ \ } d\ge Z^{-\frac{1}{3}}.
\end{aligned}\right.
\label{4-16}
\end{equation}
\end{theorem-foot}

\begin{theorem-foot}\footnotetext{\label{foot-32} Cf. Theorem~\ref{book_new-thm-27-5-6} of \cite{monsterbook}.}\label{thm-4-8}
Let assumptions  \textup{(\ref{1-8})}, \textup{(\ref{1-11})}. Then in the framework of free nuclei model with $M\ge 2$ the stable molecule does not exist unless
\begin{equation}
Z-N \le Z^{\frac{5}{7}}\bigl( Z^{-\delta}+(\alpha Z)^\delta\bigr).
\label{4-17}
\end{equation}
\end{theorem-foot}

\begin{appendices}
\chapter{Appendix}

In this section we reproduce from~\cite{EFS2}: two new Lieb-Thirring type inequalities
for the relativistic kinetic energy with a magnetic field.

\begin{theorem-foot}\footnotetext{\label{foot-33} Theorem 2.2 of \cite{EFS2}.}\label{thm-A-1}
There exists a universal constant $C>0$ such that for any positive number $\gamma>0$, for any potential $V$ with $V_{+} \in \sL^{5/2}\cap \sL^{4}(\bR^3)$, and magnetic field $B = \nabla \times A \in \sL^2({\bR}^3)$, we have
\begin{multline}
\Tr \Bigl(\bigl( \sqrt{\gamma^{-2} (D-A)\cdot\boldupsigma)^2 + \gamma^{-4}} - \gamma^{-2} - U(x) \bigl)^-\Bigr)  \ge  \\[3pt]
 - C\biggl\{ \int U_{+}^{5/2}\,dx + \gamma^3  \int U_+^4 \,dx
+\Bigl( \int |\nabla \times A|^2\,dx \Bigr)^{3/4}
\Bigl(\int U_+^4\,dx\Bigr)^{1/4}\biggr\}.
\label{A-1}
\end{multline}
\end{theorem-foot}

Notice that
Theorem~\ref{thm-A-1} reduces to the well-known Daubechies inequality in the case $A=0$ \cite{Dau}.

For the Schr\"odinger case,
the Daubechies inequality was generalized (and improved to incorporate a critical Coulomb singularity)
 to non-zero $A$   in \cite{FLS} by using diamagnetic techniques. Theorem~\ref{thm-A-1}
is the generalization of the Daubechies inequality for the Pauli operator, in which case there is
no diamagnetic inequality.
Moreover, in the $\gamma\to 0$ limit, (\ref{A-1}) converges to the magnetic Lieb-Thirring inequality
for the Pauli operator \cite{LLS}
since
\begin{equation}
\sqrt{\gamma^{-2} (D-A)\cdot\boldupsigma)^2 + \gamma^{-4}} - \gamma^{-2}\to \frac{1}{2} (D-A)\cdot\boldupsigma)^2, \qquad \gamma\to 0.
\label{A-2}
\end{equation}

Theorem~\ref{thm-A-1} does not cover the case of a Coulomb singularity. The next result shows that  for $\gamma$ smaller than the critical value $2/\pi$, the Coulomb singularity can be included.  

\begin{theorem-foot}\footnotetext{\label{foot-34} Theorem 2.3 of \cite{EFS2}.}\label{thm-A-2}
Let $\phi_r$ be a real function satisfying $\supp \phi_r \subset \{|x|\leq r\}$,
$\| \phi_r \|_{\infty} \leq 1$.
There exists a constant $C>0$ such that if $\gamma \in (0,2/\pi)$, then
\begin{multline}\
\Tr \Bigl(\phi_r \bigl( \sqrt{\gamma^{-2} (D-A)\cdot\boldupsigma)^2 + \gamma^{-4}} - \gamma^{-2} -
 \frac{1}{|x|}- U \bigr) \phi_r \Bigr)^-  \\
\ge
- C \biggl\{ \eta^{-3/2} \int |\nabla \times A|^2\,dx + \eta^{-3} r^3 + \eta^{-3/2} \int U_{+}^{5/2}\,dx +
\eta^{-3} \gamma^3 \int U_{+}^4\,dx \\
+ \Bigl(\int |\nabla \times A|^2\,dx \Bigr)^{3/4}
\Bigl(\int U_{+}^4\,dx \Bigr)^{1/4} \biggr\},
\label{A-3}
\end{multline}
where $\eta := \frac{1}{10}(1-(\pi\gamma/2)^2)$.
\end{theorem-foot}
\end{appendices}


\begin{thebibliography}{Ivr}

\bibitem[Bach]{Bach}
V. Bach. \emph{Error bound for the Hartree-Fock energy of atoms and molecules\/}.
Commun. Math. Phys. 147:527--548 (1992).

\bibitem[Dau]{Dau} I.~Daubechies:
\emph{An uncertainty principle for fermions with generalized kinetic energy.\/}
Commun. Math. Phys. 90(4):511--520  (1983).

\bibitem[EFS1]{EFS1}
L.~Erd\"os, S.~Fournais, J.~P.~Solovej,\emph{Scott correction for large atoms and molecules in a self-generated magnetic field\/}.
Commun. Math. Physics, 312(3):847--882 (2012).

\bibitem[EFS2]{EFS2}
L.~Erd\"os, S.~Fournais, J.~P.~Solovej, \emph{Relativistic Scott correction in self-generated magnetic fields\/}.
Journal of Mathematical Physics 53, 095202 (2012), 27pp. 

\bibitem[FLS]{FLS}
R.~L.~Frank, E.~H.~Lieb, R.~Seiringer. \emph{Hardy-Lieb-Thirring inequalities for fractional Schrödinger operators\/}. J. Amer. Math. Soc. 21(4), 925–-950 (2008).

\bibitem[FSW]{FSW}
R.~L.~Frank, H.~Siedentop, S.~Warzel. \emph{The ground state energy of heavy atoms: relativistic lowering of the leading energy correction\/}. Comm. Math. Phys. 278(2):549–-566 (2008).




\bibitem[GS]{GS}
G. M. Graf,  J. P Solovej. \emph{A correlation estimate with applications to quantum systems with Coulomb interactions\/}
Rev.~Math.~Phys., 6(5a):977--997 (1994).
Reprinted in The state of matter a volume dedicated to
E.~H.~Lieb, Advanced series in mathematical physics, 20,
M.~Aizenman and H.~Araki (Eds.), 142--166, World Scientific (1994).

\bibitem[IH]{Herbst}
I. W. Herbst. \emph{Spectral Theory of the operator
{$(p^2+m^2)^{1/2}-Ze^2/r$}}, Commun. Math. Phys. 53(3):285--294 (1977).

\bibitem[Ivr]{monsterbook}
V. Ivrii. \emph{Microlocal Analysis, Sharp Spectral  Asymptotics and Applications\/}.
\url{http://www.math.toronto.edu/ivrii/monsterbook.pdf}

\bibitem[Ivr2]{ivrii:rela}
V. Ivrii. \emph{Asymptotics of the ground state energy in the relativistic settings\/}.
\href{https://arxiv.org/1707.07014}{arxiv:math.1707.07014}

\bibitem[LLS]{LLS} E. H. Lieb, M. Loss, J. P. Solovej: 
\emph{Stability of Matter in Magnetic Fields\/}, Phys. Rev. Lett. 75:985--989 (1995).

\bibitem[LT]{LT}
E. H. Lieb, W. E. Thirring, \emph{Inequalities for the moments of the eigenvalues of the Schr\"odinger
Hamiltonian and their relation to Sobolev inequalities\/}, in Studies in Mathematical Physics (E. H. Lieb,
B. Simon, and A. S. Wightman, eds.), Princeton Univ. Press, Princeton, New Jersey, 1976, pp. 269--303.


\bibitem[LY]{Lieb-Yau}
E. H. Lieb,  H. T. Yau. \emph{The {S}tability and instability of relativistic matter\/}. Commun. Math. Phys. 118(2):  177--213 (1988).

\bibitem[SSS]{SSS}
J. P. Solovej, T. \O.  S{\o}rensen, W. L. Spitzer.
\emph{The relativistic Scott correction for atoms and molecules\/}.
 Comm. Pure Appl. Math., 63:39--118 (2010)Љ.
 
\end{thebibliography}
\end{document}